\documentclass[a4paper,11pt]{article} 
\usepackage{amsmath,amssymb}
\usepackage{graphicx,url}

\newcommand{\ZZ}{\mathbb{Z}}
\newcommand{\CC}{\mathbb{C}}
\newtheorem{Theorem}{Theorem}
\newtheorem{Proposition}[Theorem]{Proposition}
\newtheorem{Lemma}[Theorem]{Lemma}
\newtheorem{Remark}[Theorem]{Remark}

\title{Quasicrystal model sets from overlapping self-similar attractors}
\author{Christoph Bandt and Yves Meyer}
\begin{document}
\maketitle


\begin{abstract}
We give a simple computational approach to mathematical quasicrystals, combining cut-and-project methods with self-similarity. Starting with a Pisot unit $\beta$ and an iterated function system $g_k(z)=\beta z +w_k, \  k=1,...,m$ in a corresponding ring of algebraic integers, we take the attractor $A$ of the conjugate system as an acceptance window. This yields a unique cut-and-project model set $\Lambda$ in the complex plane which fulfils  $\Lambda=\bigcup g_k(\Lambda) .$

We describe an algorithm which directly determines $\Lambda ,$ avoiding the difficulties with the fractal structure of $A.$ Classical constructions are based on tiles $A$ of different shape. The present study continues work on models with overlaps, as introduced by Gummelt (1996), Baake and Grimm (2013),
Hejda and Pelantov{\'a} (2016), and Hare, Mas{\'a}kov{\'a}, and V{\'a}vra (2018).  In our approach, the overlaps provide a natural decoration of $\Lambda .$  The method is illustrated with a variety of pentagonal examples.
\end{abstract}

\section{Motivation} \label{moti}
There are two main principles to construct Delone sets modelling quasicrystals: cut-and-project models and inflation procedures. Both were justified by physical experiments, and they can be combined. Many papers have studied the interplay of the two principles, cf. \cite{AFHI,Baake2013,HMV,LS12,MMP}. The recent work of Harriss, Koivusalo and Walton \cite{HKW} summarizes a main connection in an abstract $n$-dimensional and multicolored setting as follows: \emph{a cut-and-project set is substitutional if and only if its acceptance window has a self-similar structure.}

The present paper contains a more concrete statement for expanding iterated function systems (IFS) in the plane: \emph{any IFS with a Pisot unit as factor has a unique cut-and-project set as its maximal solution within the corresponding ring of integers.} The only other condition is that the IFS contains enough maps to make its conjugate attractor plane-filling. The statement could be generalized to higher dimensions, using the concept of a Pisot family \cite{LS12}, to graph-directed IFS, and also to fractal patterns which are not relatively dense \cite{EFG3}.  This note aims more at simplicity than generality, however.  It arose from a search for models of decagonal quasicrystals which are simpler than the common Penrose tilings \cite{Baake2013,GS,Se95}, or the version with overlapping Gummelt decagons often used by physicists \cite{Gu96,St21}.   The `atomic positions' shall be defined directly, without use of rotations and different types of tiles.

Let $G=\{ w^1,...,w^5\}$ be the set of fifth roots of unity. In a first approach, we studied the direct sum $\Lambda_1=\bigcup_{n=0}^\infty \sum_{k=0}^n \tau^k G$ where $\tau=\frac{1+\sqrt{5}}{2}.$  From the viewpoint of numeration systems, the convex hull of $G,$ shown in Figure \ref{ff0}, is the `unit interval', and the digits are taken from $G.$ The resulting set $\Lambda_1$ is shown in Figure \ref{ff1}a. Hare, Mas{\'a}kov{\'a}, and V{\'a}vra \cite{HMV} studied the set of sums $z=\sum_{k=0}^n a_k\tau^k$ with $n\in\mathbb N$ and digits $a_k\in \{0\}\cup G$ which contains $\Lambda_1.$ Both definitions lead to a proper subset of a cut-and-project model set. They have to be extended by the set $\Lambda_2$  in Figure \ref{ff1}b in order to become the model set $\Lambda^*$  in Figure \ref{ff1}c.

We shall give a straightforward construction for the model set of an arbitrary Pisot IFS, which allows to calculate many self-similar model sets.  Over the years, physicists have synthesized hundreds of stable quasicrystals \cite{AYP,Li16,QCAI,St21,SD14}, probably not all of equal importance. Mathematical models stick mainly to the Penrose family, or the T\"ubingen triangle tilings \cite{Baake2013}. The algorithm presented here makes it possible to screen many self-similar model sets, most of which are probably not important. Actually, we were not sure whether the IFS approach provides patterns of the same quality as the use of tilings. We expected that only with graph-directed IFS the Penrose patterns could be reproduced.  It came as a surprise that even the simplest ordinary IFS lead to a Penrose-type model and at least two interesting new examples. 

An IFS seems the most simple and computer-accessible description of a self-similar model set. For the plane it requires less information than three lines of text. The atomic positions are fixed by the IFS. For our figures we only chose the size and location of the view. As a rule, regions far from the origin provided more regular patches.  The decoration for our figures is also defined by the IFS. We only selected the colors. The algorithm of this paper could be a step towards the establishment of computer-maintained databases of mathematical quasicrystals which could complement the beautiful tiling encyclopedia \cite{tilingencyc}. This requires other algorithms for the characterization, comparison and classification of patterns. Physicists already use machine learning for the prediction of new quasicrystals \cite{QCAI}.

\begin{figure}[h!t]
\begin{center}
\includegraphics[width=0.3\textwidth]{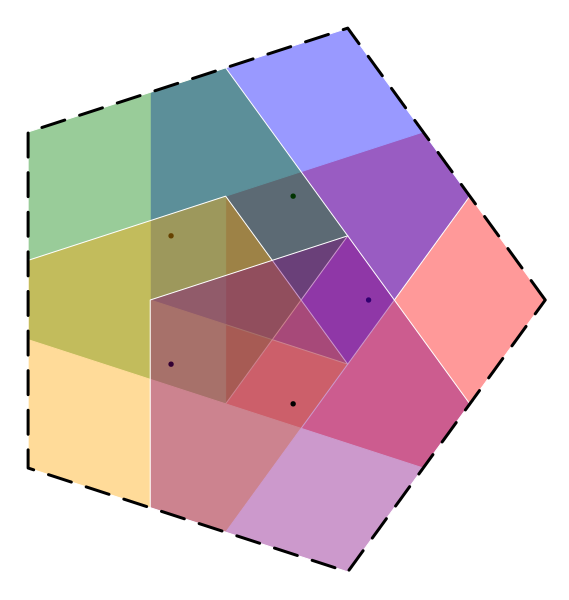} 
\end{center}
\caption{The pentagon is a typical overlapping self-similar set. Its geometry is crucial for our examples. Our basic example in Figure \ref{ff1} will be derived from a conjugate attractor, however.}\label{ff0}      
\end{figure}

\section{Overview} \label{over}
In the complex plane, we consider an iterated function system (IFS) of expanding functions 
\begin{equation} g_k(z)=\beta z +w_k\, , k=1,...,m
\label{rifs} \end{equation}
where $\beta$ is a real or complex Pisot unit, and both $\beta$ and the $w_k$ are integers in an algebraic field. There will be a conjugate IFS of contracting functions
\begin{equation} g'_k(z)=\beta' z +w'_k \ .
\label{cifs} \end{equation} 
A well-known theorem of Hutchinson \cite{BSS,Bar,Fal} says that there is a unique nonempty compact attractor $A$ defined by 
\begin{equation}\label{hut}
A=\bigcup_{k=1}^m g'_k(A) \ .
\end{equation}
We shall consider an associated equation for discrete sets and the original expanding IFS.
\begin{equation}\label{lamb} 
\Lambda = \bigcup_{k=1}^m g_k(\Lambda) \ . \end{equation}
Statements related to the following theorem are known from the literature, in particular for the octagonal case \cite{AFHI,Baake2013,HMV,HKW,HP16,MMP}. We shall provide a new and astonishingly simple constructive approach. With $\Lambda(W)$ we denote the model set for a window $W$ in conjugate space. When $W$ has interior points and the boundary of $W$ has zero area, it is known that $\Lambda (W)$ is a Meyer set and has pure point diffraction spectrum. These are the basic properties for quasicrystal modeling. Precise definitions will be introduced in the following sections. Our main result holds without the assumption that $W$ has interior points. It includes the case of fractal attractors $A$ as studied in \cite{EFG3}. 

\begin{Theorem} Let  $g_k(z)=\beta z +w_k\, , k=1,...,m$ be an expanding IFS where $\beta$ is a real or complex Pisot unit in an algebraic field $K,$ so that the algebraic conjugates $\beta'$ have modulus smaller than one. Let the $w_k$ belong to the ring $R$ of algebraic integers in $K.$ Then the equation \eqref{lamb} has a unique solution $\Lambda^*$ in the class of cut-and-project model sets $\Lambda (A)$ over $R$ with a compact window $A.$ For $\Lambda^*,$ the window is the attractor of the conjugate, contracting IFS \eqref{cifs}, or in the case of several conjugates, the product of their attractors. Moreover, $\Lambda^*$ is the greatest subset of $R$ which solves the equation \eqref{lamb}. 
\label{main}\end{Theorem}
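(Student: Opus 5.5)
The plan is to work entirely in the star map $x\mapsto x^\star$, which sends $x\in R$ to the vector of its conjugates; when there is a single conjugate this is just $x\mapsto x'$. The model set for a window $W$ is $\Lambda(W)=\{x\in R: x^\star\in W\}$. The key identity is $g_k(x)^\star = \beta^\star x^\star + w_k^\star = g'_k(x^\star)$. This holds because $\beta$ and the $w_k$ lie in $R$, so the conjugation maps are ring homomorphisms. Moreover, since $\beta$ is a unit, each $g_k$ is a bijection of $R$ onto itself, with inverse $x\mapsto \beta^{-1}(x-w_k)\in R$. From these two facts I will derive, for any window $W$,
\[
\bigcup_k g_k(\Lambda(W)) \;=\; \Lambda\Big(\bigcup_k g'_k(W)\Big).
\]
To see this, note that $y\in R$ lies in the left side iff some $x=g_k^{-1}(y)\in R$ has $x^\star\in W$, i.e. iff $y^\star\in g'_k(W)$. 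Consequently $\Lambda(A)$ solves \eqref{lamb} whenever $A$ satisfies \eqref{hut}. In the case of several conjugates, $A$ is the product of the conjugate attractors. It is invariant under the diagonal maps $g'_k$ because the same index $k$ is used in every coordinate. To handle this, I will define the window as the attractor of the product IFS on conjugate space; I expect this to coincide with the product description in the statement, and this should be checked when writing the proof. This gives existence of $\Lambda^*=\Lambda(A)$.

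For uniqueness among model sets with compact windows, suppose $\Lambda(W)$ solves \eqref{lamb}. The identity above gives $\Lambda(W)=\Lambda(F(W))$, where $F(W)=\bigcup_k g'_k(W)$ is the Hutchinson operator. By density of $R^\star$ in conjugate space, two model sets coincide iff their windows agree on $R^\star$. Hence $W\cap R^\star = F(W)\cap R^\star$, and so $W\cap R^\star=F^n(W)\cap R^\star$ for all $n$. Since $F^n(W)\to A$ in the Hausdorff metric by Hutchinson's theorem, the points of $W\cap R^\star$ lie in the closed set $A$. Conversely, I need $A\cap R^\star\subseteq W$. Uniqueness must be understood modulo the choice of window: here I would normalize windows as closures of $W\cap R^\star$, or note that the model set only depends on $W\cap R^\star$. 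It then suffices to show that $\Lambda(W)=\Lambda(A)$ as subsets of $R$, which is the real claim.

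The cleanest route to both uniqueness and the maximality statement is to show that any $\Lambda\subseteq R$ with $\Lambda=\bigcup g_k(\Lambda)$ satisfies $\Lambda^\star\subseteq A$. Take $x\in\Lambda$. By the equation, we can repeatedly write $x=g_{k_1}(x_1)$, $x_1=g_{k_2}(x_2)$, and so on, with all $x_n\in\Lambda$. Applying the star map gives $x^\star=g'_{k_1}\circ\cdots\circ g'_{k_n}(x_n^\star)$. This forces $\mathrm{dist}(x^\star,A)\le |\beta'|^n\,\mathrm{dist}(x_n^\star,A)$. I will use a forward-invariant ball: choose a large ball $B\supseteq A$ with $F(B)\subseteq B$. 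Since $\Lambda\subseteq\Lambda(A)$ is what we want, the obstacle is that the backward orbit $x_n$ could escape to infinity in conjugate space. To control this, I will use the expanding direction. Since $|\beta|>1$ and $x_n=g_{k_n}^{-1}(x_{n-1})$, we have $|x_n|\le |\beta|^{-1}|x_{n-1}|+C$, so $x_n$ is bounded in the physical coordinate. The conjugate coordinates satisfy $x_n^\star=(\beta^\star)^{-1}(x_{n-1}^\star-w^\star)$, which can grow at rate $|\beta'|^{-n}$. That rate is exactly cancelled by the factor $|\beta'|^n$ in the estimate above, giving only boundedness, not convergence, so this alone is insufficient. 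Instead I will use discreteness: the $x_n$ lie in a bounded physical region, but they are elements of the lattice $R$ whose full Minkowski embedding must be considered. I expect this to be the main obstacle.

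My planned fix for that obstacle is to reverse the viewpoint and use a pigeonhole argument. The pairs $(x_n, x_n^\star)$ form a sequence in the Minkowski lattice of $R$. If the conjugate parts were bounded, the sequence would take finitely many values, hence be eventually periodic, and so $x^\star$ would be the image of a periodic point $p^\star$ under a composition of maps $g'$. A periodic point of a composition of the $g'_k$ lies in $A$, and $A$ is invariant under $F$. In general I will first show, using the contraction estimate, that $\mathrm{dist}(x_n^\star,A)$ cannot stay bounded unless it is identically $0$. The argument runs the iteration in the other direction: $\mathrm{dist}(x_{n}^\star,A)\ge|\beta'|^{-1}\mathrm{dist}(x_{n-1}^\star,A)-0$, so if $x^\star\notin A$ the distances grow geometrically. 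Combined with $x_n$ bounded in the physical coordinate, the points $x_n$ would be elements of $R$ with bounded physical part and conjugate part tending to infinity. This is not a contradiction by itself, so I will additionally require that $\Lambda$ be a Delone set, or work within the class of model sets, where $\Lambda^\star$ is bounded by compactness of the window. Under that boundedness, $x_n^\star$ stays bounded, hence $x^\star\in\bigcap_n F^n(\overline{\Lambda^\star})\subseteq A$. This gives $\Lambda\subseteq\Lambda(A)$ and the uniqueness. For the unrestricted maximality claim I would try to prove boundedness of $\Lambda^\star$ directly from the self-covering equation, and I regard this as the main difficulty.
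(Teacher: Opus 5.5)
Your identity $\bigcup_k g_k(\Lambda(W))=\Lambda\bigl(\bigcup_k g'_k(W)\bigr)$ is correct and gives existence more directly than the paper's route through cycles. Your Hutchinson-limit argument correctly yields $W\cap R^\star\subseteq A$, i.e.\ $\Lambda(W)\subseteq\Lambda(A)$, for every solution $\Lambda(W)$ with compact window. But uniqueness also needs the reverse inclusion $A\cap R^\star\subseteq W$, which you state and never prove. Your closing sentence ``this gives $\Lambda\subseteq\Lambda(A)$ and the uniqueness'' is a non sequitur: containment in $\Lambda(A)$ is shared by solutions such as the set $\Lambda_1$ of Section~\ref{moti}, which lies strictly inside $\Lambda(A)$.

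The missing idea is the density statement of Proposition~\ref{inclu}(ii): the conjugates of any nonempty solution are dense in $A$. In your notation, let $y\in W\cap R^\star$. Then $F(W)\cap R^\star=W\cap R^\star$ gives inductively $g'_\sigma(y)\in W$ for every finite word $\sigma$. The sets $g'_\sigma(A)$ with $|\sigma|=n$ cover $A$, and $|g'_\sigma(y)-g'_\sigma(b)|=|\beta'|^n|y-b|$, so these points are dense in $A$. Hence the closed set $W$ contains $A$, and $\Lambda(W)=\Lambda(A)$.

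The ``main difficulty'' you isolate for maximality cannot be overcome, because the unrestricted claim is false. Since $\beta$ is a unit, $g_k(R)=\beta R+w_k=R$, so $R$ itself solves \eqref{lamb}. Maximality is meant among closed discrete solutions, the standing assumption of Section~\ref{expa}, and there no control of the conjugates is needed. Your backward orbit $x_n$ stays in $B_{2c}(0)$ by Lemma~\ref{lemm}. This ball contains only finitely many points of the discrete set $\Lambda$, so some $x_n$ repeats and the repeating block is a cycle of the $g_k$. Its conjugate is a cycle of the $g'_k$ and therefore lies in $A$, and invariance of $A$ gives $x^\star=g'_{k_1}\cdots g'_{k_n}(x_n^\star)\in A$. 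This pigeonhole in physical space, rather than in the Minkowski embedding, is the paper's argument (Propositions~\ref{propo} and~\ref{inclu}). Your fallback via bounded $\Lambda^\star$ only covers model-set solutions, which your Hutchinson argument had already handled.

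Finally, the check you postpone for several conjugates fails in general. The product $A_1\times\cdots\times A_q$ is in general not invariant under the diagonal Hutchinson operator. The attractor of the diagonal IFS $(g^1_k,\dots,g^q_k)_k$, in which the addresses must agree in all coordinates as in Proposition~\ref{proda}.1, is usually a proper subset of that product. Keep the diagonal attractor as the window; it is the one your identity makes self-similar.
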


This can be considered as a discrete counterpart of Hutchinson's theorem for attractors. 
The assumption of a Pisot number is natural since whenever $g_1(\Lambda)\subseteq \Lambda,$ the expanding factor must be a Pisot or a Salem number \cite{Meyer72,Ferru}, and no solution of an equation \eqref{lamb} for a Salem factor has ever been found. We restrict ourselves to units since we use the inverse maps of the $g_k$ in the sections \ref{expa} and \ref{fini}.

Self-similarity has been frequently observed in physical quasicrystals \cite{Li16,Se95,St21}. Here its main purpose is to determine $\Lambda^*$ explicitly by recursion. We combine the cut-and-project method with self-similarity and circumvent the difficulties caused by the fractal structure of $A.$  \smallskip 

\begin{figure}[h!t]
\begin{center}
{\bf a} \includegraphics[width=0.46\textwidth]{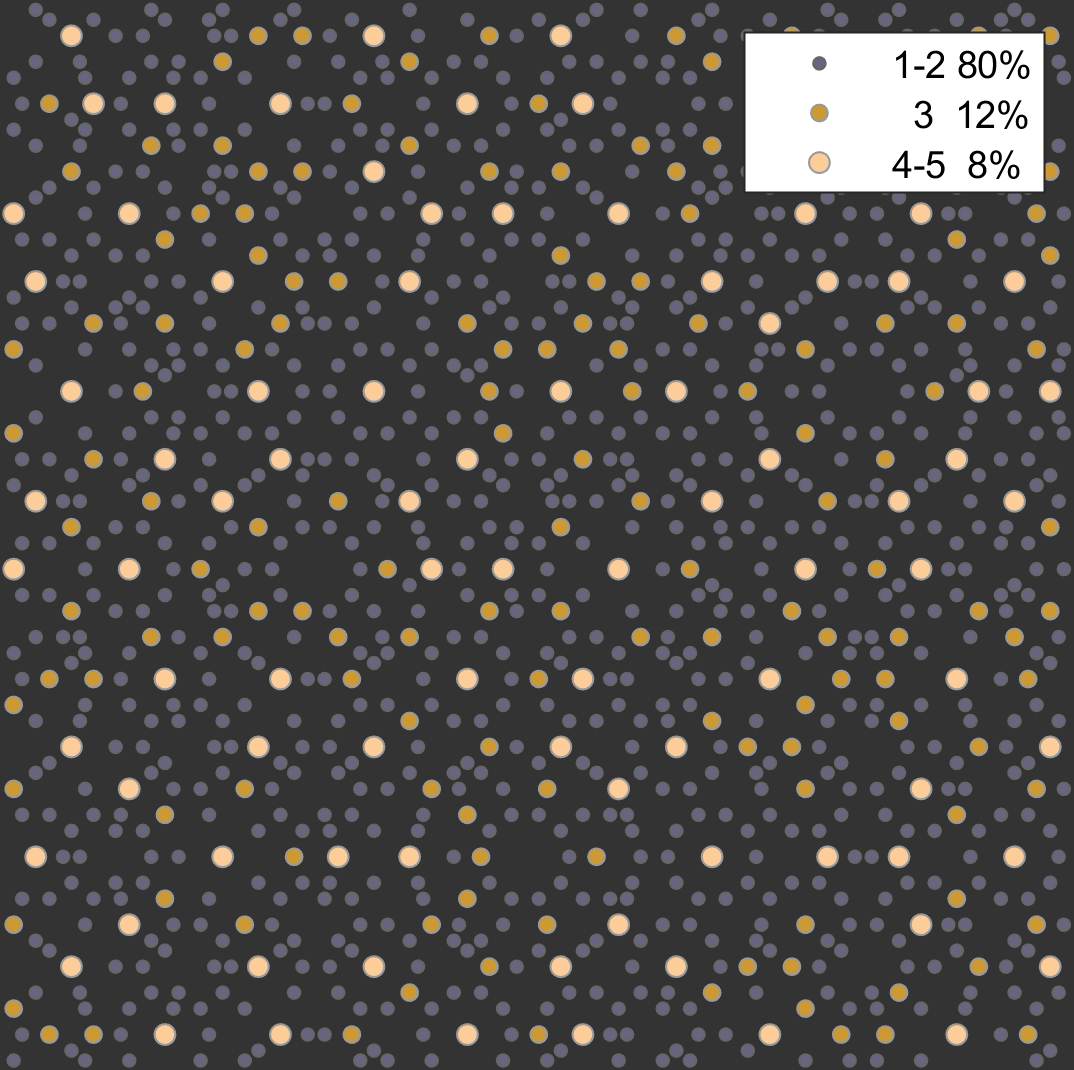} \ \ {\bf b}
\includegraphics[width=0.46\textwidth]{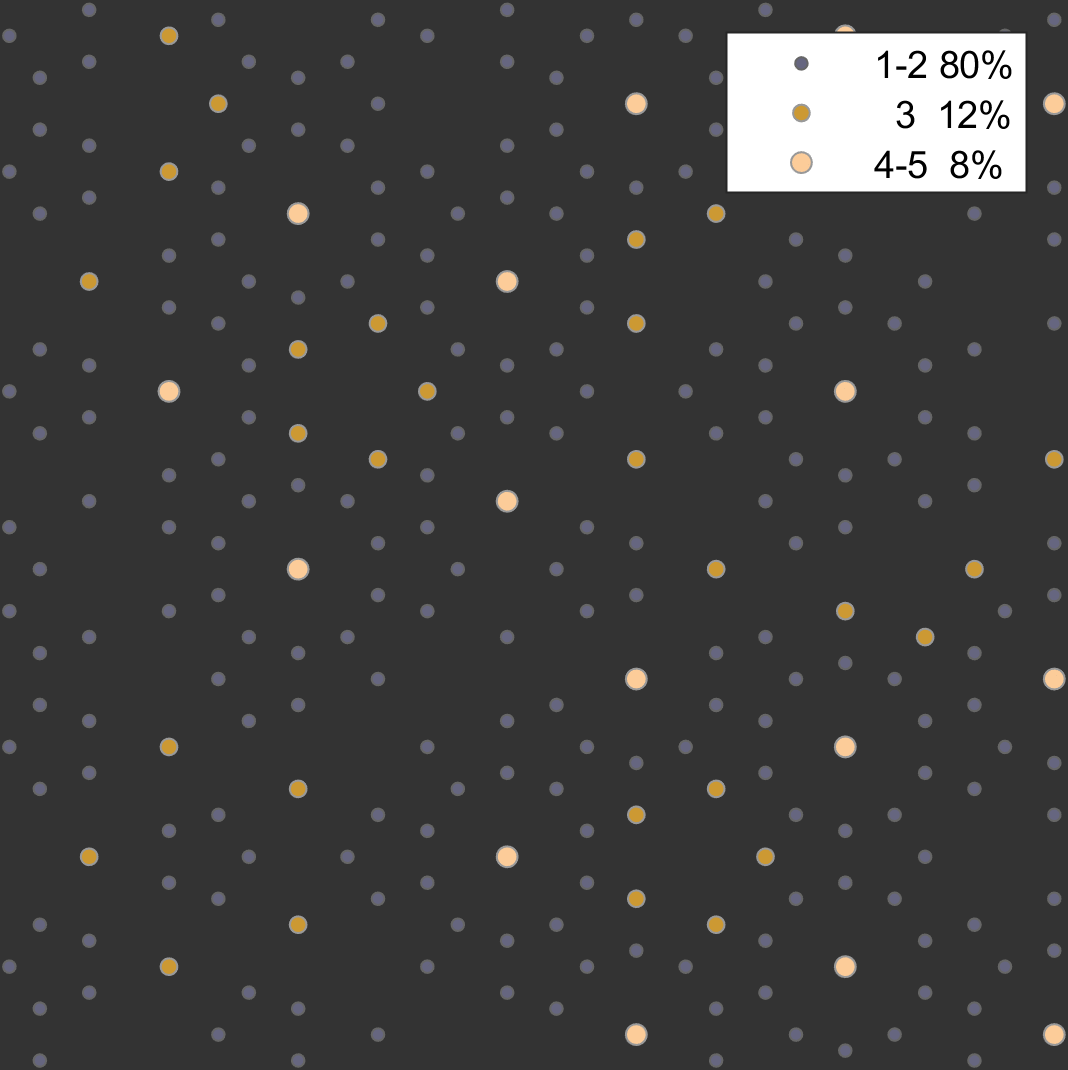}\vspace{1ex}\\ 
{\bf c} \includegraphics[width=0.46\textwidth]{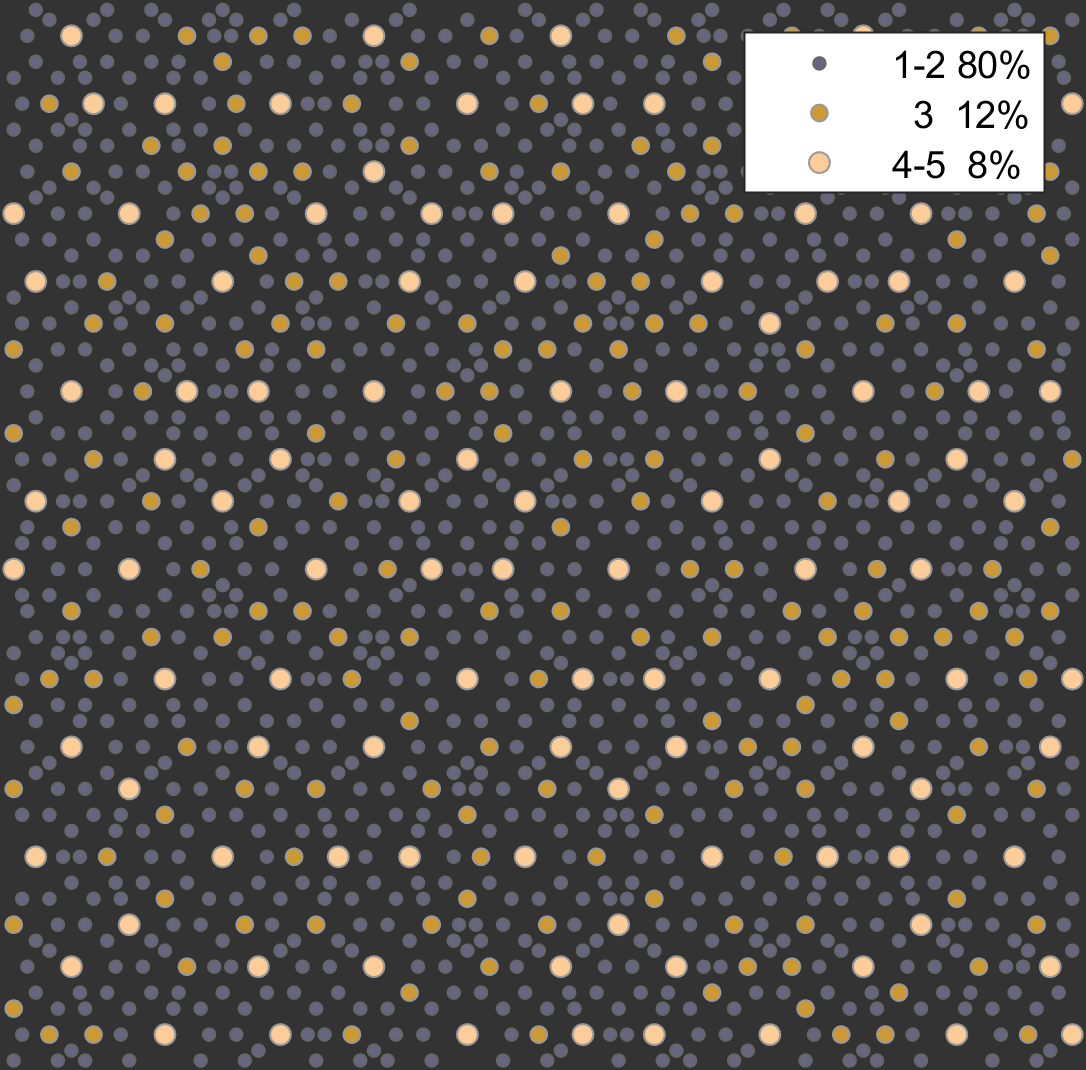} \ \ {\bf d}
\includegraphics[width=0.46\textwidth]{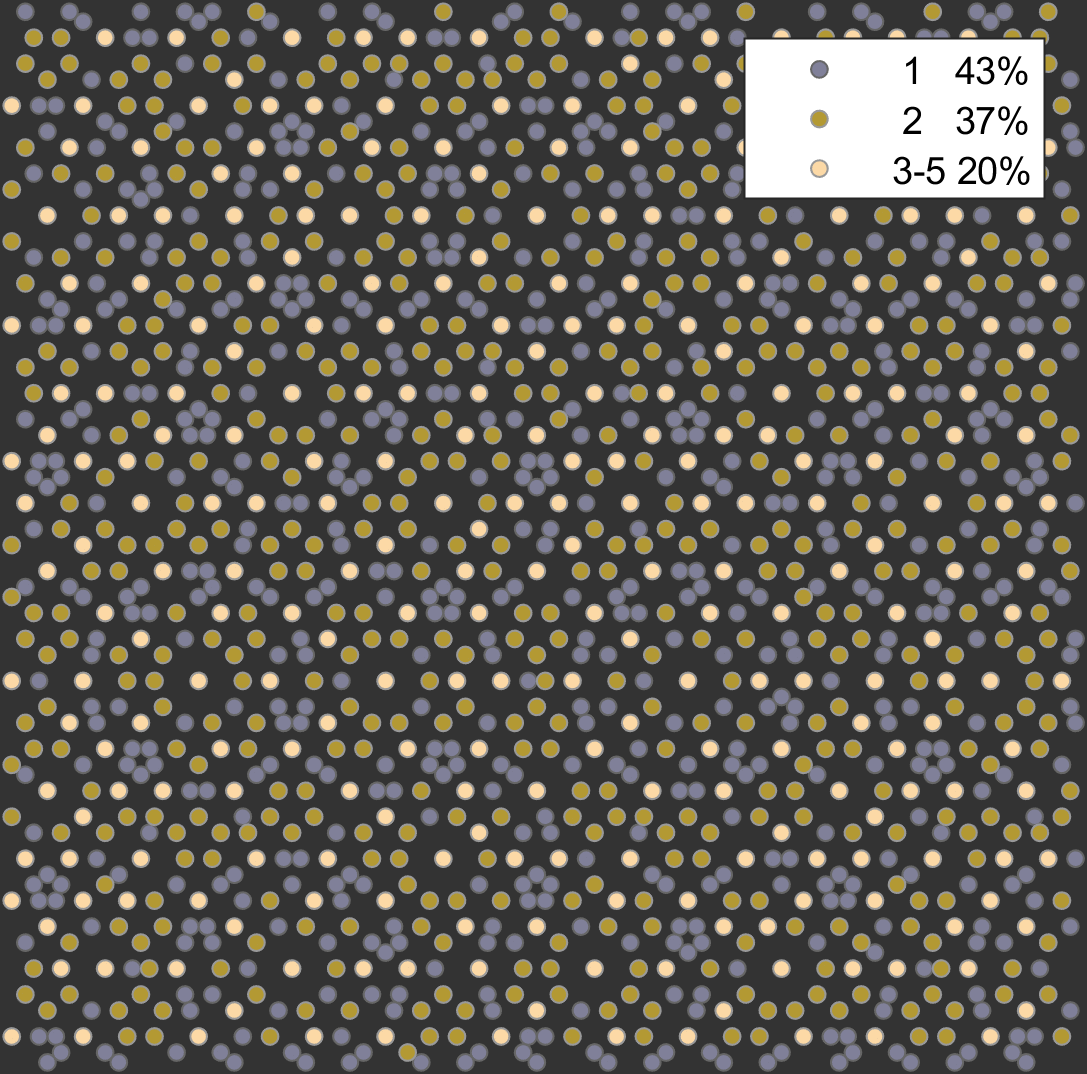}
\end{center}
\caption{{\bf a} and {\bf b} are two solution sets of \eqref{lamb} for $g_k(z)=\tau z+w^k, k=1,...,5$ where $w=\exp 2\pi i/5.$ Here $-3<z_1,z_2<12,$ so the origin is near the lower left corner. {\bf c} is the model set $\Lambda(A)$ obtained as the union of {\bf a} and {\bf b}.  In {\bf d} this set is shown in a region away from zero, $5<z_1,z_2<20,$ with a slightly different decoration. The decorations are explained in Section \ref{deco}.}\label{ff1}      
\end{figure}

For our pentagonal example, $\beta=\tau$ and $\beta'=-t=-1/\tau .$ The attractor of \eqref{cifs}, drawn in Figure \ref{ff2}b in Section \ref{penta}, contains the closed ball $B$ of radius $\tau$ around zero since $B\subset\bigcup g'_k(B).$  So $A$ certainly has non-empty interior. The set $\Lambda_1$ defined above and shown in Figure \ref{ff1}a  is a Meyer set since $\Lambda_1\subset \Lambda(A).$  However, as proved in \cite[Section 4]{HMV}, 
it is not the model set of $A$ - some points are missing. The missing points, however, form another Meyer set $\Lambda_2$ depicted in Figure \ref{ff1}b. The union  $\Lambda_1\cup\Lambda_2$ in Figure \ref{ff1}c is the model set $\Lambda^*=\Lambda(A)$ which seems a more perfect and physically plausible quasicrystal model than $\Lambda_1.$\smallskip

The attractors $A,$ like Figure \ref{ff0}, have very heavy overlaps which obscure their structure. Probably for this reason tilings have been preferred in quasicrystal modelling. However, as noted by various authors \cite{AYP,Gu96,St21,SD14}, 
from a physical viewpoint the formation of quasicrystals from overlapping clusters seems more plausible than the edge-to-edge fitting of tiles.  Moreover, while heavy overlaps of attractors are a nightmare in fractal geometry, they do not form an obstacle in the study of discrete sets. When two points overlap, it is still one point. The overlaps even turn into a virtue when we use them to indicate the importance or stability of an atom in the compound. The color and size of the atoms in Figure \ref{ff1} were obtained from the number of immediate overlaps in the point. The frequencies and relative positions of different atoms are in rather close analogy to physical quasicrystals composed of different metallic atoms, cf. \cite{ANZ,Li16,St21,SD14}. \smallskip

In principle, quasicrystal modelling by tilings and the present construction of Delone sets from overlapping attractors are two sides of a coin. The assumption of a Pisot factor implies that the conjugate IFS \eqref{cifs} fulfils the weak separation condition \cite{EFG3}. A finite automaton transfers this IFS into a graph-IFS with the open set condition which represents a self-similar tilling \cite{EFG5}.  In practice, however, the number of types of tiles can become large, and it is much easier to write down a Pisot IFS than to construct a self-similar tiling.  Thus for a computer-assisted study of quasicrystal Delone sets the approach presented below seems to be fruitful.  

In the sections \ref{conj} to \ref{fini}, we introduce the basic concepts and prove Theorem \ref{main}.  Section \ref{algo} describes the algorithmic construction. The pentagonal example of Figure \ref{ff1} is treated in detail in Section \ref{penta}.  The natural decoration of the patterns is explained in Section \ref{deco}. The last three sections discuss interesting examples.

\section{The cut-and-project scheme for Pisot conjugacy}\label{conj}
Let $\beta$ be a real or complex Pisot unit. That is, $\beta$ is a root of a polynomial $p(z)=z^d+a_{d-1}z^{d-1}+...+a_1z +a_0$ with integer coefficients $a_k$ and $a_0=\pm 1$ such that $|\beta|>1$ and all other roots $\beta'$ of $p,$ except for the complex-conjugate of $\beta ,$ have modulus smaller than 1. 
Our calculations are performed within the ring $R$ of integers of a complex algebraic number field $K.$ \smallskip

{\bf Complex Pisot numbers $\beta .$}  In this case $K$ is generated by $\beta ,$ and $R=\mathbb Z[\beta ].$  The complex numbers $1,\beta,\beta^2,...,\beta^{d-1}$ form a linear basis of $R.$ Any point $x\in R$ has a representation $x=n_0+n_1\beta +n_2\beta^2+...+n_{d-1}\beta^{d-1}$ with $n_k\in\mathbb Z.$ This defines a one-to-one mapping $\pi : \mathbb Z^d \to R\subset \CC$ with $\pi (n)=x$ for $n=(n_0,...,n_{d-1}).$

For each conjugate root $\beta'$ we have another basis $1,\beta',...,(\beta')^{d-1}$ of $R$ and another one-to-one projection $\pi' : \mathbb Z^d \to \CC$ with $\pi' (n)=x'=n_0+n_1\beta'+...+n_{d-1}(\beta')^{d-1}.$
There is a Galois automorphism $\varphi :R\to R$ with $\varphi(\beta )=\beta' .$ This mapping maps $x$ to $x',$ it is the composition $x'=\varphi(x)=\pi'\cdot\pi^{-1}(x).$ In this section, we discuss the case of a single conjugate. The combination of several conjugates is treated in Section \ref{fini}.

We have described the spaces of a cut-and-project scheme \cite{Baake2013,Mo97,Se95}. When we write the basis vectors $1,\beta,...,\beta^{d-1}$ as a $2\times d$ real matrix $B$ and the vectors of the conjugate basis $1,\beta',...,(\beta')^{d-1}$ as another $2\times d$ matrix $B',$ and take $n$ as a column vector $n=(n_0,...,n_{d-1})^\top ,$ the scheme is realized by matrix multiplication:
\[ \pi:\mathbb Z^d\to\mathbb C \mbox{ with }\pi (n)=B\cdot n \mbox{ and } \pi':\mathbb Z^d\to\mathbb C \mbox{ with }\pi' (n)=B'\cdot n. \] 
This scheme can be numerically represented in a computer when we restrict ourselves to coordinates $n_k$ with $|n_k|\le N$ for small $N.$ Let
\begin{equation}
\ZZ^d_N = \{ n=(n_0,...,n_{d-1})^\top \in\ZZ^d\ | \ -N\le n_k\le N \mbox{ for } k=0,1,...,d-1\} \ .
\label{zd}\end{equation}

\begin{Remark}[Numerical realization of the cut-and-project scheme]
Multiplying $B$ and $B'$ with all column vectors $n\in \ZZ^d_N,$ we obtain two matrices $X$ and $X'$ of shape $2\times (2N+1)^d.$ If $x$ is the number in the $k$-th column of $X,$ the $k$-th column of $X'$ contains $x'=\varphi (x).$ In this way the matrices $X,X'$ describe the correspondence $\varphi$ between $x=\pi (n)$ and $x'=\pi' (n).$ All examples of this paper can be derived from a single calculation of $X$ and $X'$ for $N=5.$
\label{numcut}\end{Remark}

{\bf Real Pisot numbers $\beta .$} We consider the cyclotomic field $K$ generated by the $n$-th roots of unity, $w^k, k=0,1,...,n-1$ with $w=\cos \frac{2\pi}{n}+i\sin \frac{2\pi}{n},$ and $R=\mathbb Z[w].$ In our examples we take $\beta =(w+\overline{w})+1$ which is a Pisot unit for $n=5,7,8,$ and 12. The condition required for $\beta$ is
\begin{equation}
\mbox{The only Galois automorphisms $\psi$ of $K$ with $\psi(\beta)=\beta$ are $\psi(z)=z$ and $\psi(z)=\overline{z}.$}
\label{realcond}\end{equation}
This implies $|\beta'|<1$ for all algebraic conjugates in $K.$ The condition includes the Pisot-cyclotomic numbers, as defined in \cite{HMV} and listed there in Table 1.

In the cyclotomic field, the Galois automorphisms $\varphi :R\to R$ are defined by $\varphi (z)=z^\ell$ where $\ell$ is an integer between 1 and $\frac{n}{2}$ such that the greatest common divisor ${\rm gcd}(\ell,n)$ is 1. Then $\beta'=\varphi(\beta)= (w^\ell+\overline{w}^\ell)+1$ is a Galois conjugate of the Pisot number $\beta$ and thus must have modulus smaller than 1. Remark \ref{numcut} applies with basis matrices $B$ and $B'$ representing $w,w^{m_2},...,w^{m_d}$ and $w^\ell,w^{m_2\ell},...,w^{m_d\ell}.$ Here $1,m_2,...,m_d$ denote the integers between 1 and $n-1$ with  ${\rm gcd}(m_k,n)=1.$ 

For the pentagon example $n=5$ and $d=4.$ Taking $\ell=2,$  the columns of the matrices $B,B'$ are given by $w,w^2,w^3,w^4$ and $w^2,w^4,w,w^3,$ respectively.

\section{The expanding IFS and self-similar discrete sets}\label{expa}
Let $\beta$ with $|\beta|>1$ and $w_1,...,w_m$ belong to the ring $R$ of algebraic integers. The Pisot property is not needed in this section. We consider the expanding IFS    
$g_k(z)=\beta z +w_k , \ k=1,...,m.$ A point set $\Lambda$ is called self-similar with respect to the IFS if
\[ \hspace*{36ex}  \Lambda = \bigcup_{k=1}^m g_k(\Lambda) \ .\hspace{40ex} (4)\]
Any solution set must be infinite, and any union of solutions is a solution of \eqref{lamb}. In general there are many solutions.  The equation $\Lambda=2\Lambda \cup (2\Lambda -1),$  for instance, has as solutions on the real line the positive integers, the non-positive integers, all rationals, all rationals of the form $k/p$ for any fixed prime number $p,$ and so on.  

We restrict ourselves to the ring $R$ and ask for discrete solutions. In this case, there is a nice structure of solutions  \cite{Ba97,LW03,Str98}. 
A closed set $L$ in a metric space is \emph{discrete} if each ball contains only finitely many points of $L.$ In the sequel, we consider only closed and discrete solutions $\Lambda$ of \eqref{lamb}.  A \emph{cycle} of the IFS $\{g_1,...,g_m\}$ is a set $\{x_1,...,x_n\}$ such that there are indices $j_1,...,j_n\in \{ 1,...,m\}$ with $g_{j_k}(x_k)=x_{k+1}$ for $k=1,...,n-1$ and $g_{j_n}(x_n)=x_1.$ The following estimate shows that all cycles of the IFS lie in a fairly small ball. This is the starting point for our calculational approach. \smallskip

\begin{Lemma} \quad Let $c=\frac{\max_j |w_j|}{|\beta|-1}. $\quad  Then $|y|>c$ implies $|g_k(y)|>|y|$ for $k=1,...,m.$ \vspace{1ex}\\ 
Thus all cycles of the IFS $\{g_1,...,g_m\}$ lie within the closed ball $B_c(0)$ of radius $c$ around zero. \vspace{1ex}\\   Moreover, $|y|>2c$ implies $|g_k^{-1}(y)|<|y| -\frac{\max |w_j|}{|\beta|}$ for $k=1,...,m.$
\label{lemm}\end{Lemma}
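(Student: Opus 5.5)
The plan is to prove all three assertions with the reverse triangle inequality, using only $|\beta|>1$. Write $M=\max_j|w_j|$, so that $c=M/(|\beta|-1)$ and $M=c(|\beta|-1)$.

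\emph{First assertion.} For $|y|>c$ we have
\[ |g_k(y)| \ge |\beta||y|-|w_k| \ge |\beta||y|-M = |y| + \bigl((|\beta|-1)|y|-M\bigr) > |y| , \]
because $(|\beta|-1)|y|>(|\beta|-1)c=M$.

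\emph{Cycles.} Take a cycle $\{x_1,\dots,x_n\}$ and choose the point $x_i$ of maximal modulus. Its successor $x_{i+1}=g_{j_i}(x_i)$ also lies in the cycle (indices mod $n$), so $|x_{i+1}|\le|x_i|$. If $|x_i|>c$ held, the first assertion would give $|x_{i+1}|>|x_i|$, a contradiction. Hence $|x_i|\le c$, and since $x_i$ has the largest modulus, every point of the cycle lies in $B_c(0)$.

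\emph{Inverse maps.} We have $g_k^{-1}(y)=(y-w_k)/\beta$, so
\[ |g_k^{-1}(y)|\le \frac{|y|+M}{|\beta|}. \]
It therefore suffices to check that
\[ \frac{|y|+M}{|\beta|} < |y|-\frac{M}{|\beta|}, \]
which is equivalent to $|y|+2M<|\beta||y|$, that is, $(|\beta|-1)|y|>2M$, that is, $|y|>2c$. This is exactly the hypothesis, so the inequality holds.

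None of these steps is a real obstacle; the only point that needs care is choosing the maximal-modulus point in the cycle argument.
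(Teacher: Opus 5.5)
Your proof is correct and follows essentially the same route as the paper: the reverse triangle inequality for $|g_k(y)|$, and for the inverse maps the bound $|g_k^{-1}(y)|\le(|y|+M)/|\beta|$ combined with $(|\beta|-1)|y|>2M$. Your maximal-modulus argument makes explicit what the paper asserts in one line, that a point with $|y|>c$ cannot lie on a cycle.
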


\noindent {\it Proof. } $|y|>c$ means  $|y|\cdot|\beta|- \max |w_j| >|y|.$ Thus $|g_k(y)|=|\beta y +w_k|\ge |\beta|\cdot|y|-|w_k|>|y|.$  Such $y$ cannot belong to a cycle. \quad  For $|y|>2c$  we have $|\beta|\cdot|y| -|y|>2\max |w_j|.$  Hence \ $|g_k^{-1}(y)|=|(y-w_k)/\beta| < |y/\beta|+|w_k/\beta|<|y| -2\max |w_j|/|\beta| +|w_k|/|\beta|.$ 
\hfill $\Box$

\begin{Proposition}[Basic properties of discrete self-similar sets \cite{Ba97,LW03,Str98}] \hfill

Let $\{ g_1,...,g_m\}$ be the expanding IFS \eqref{rifs} on $\CC ,$ and let $L\subset\CC$ be a discrete closed set.
\begin{enumerate}
\item Any closed and discrete solution $\Lambda$ of \eqref{lamb} must contain a cycle of the IFS. 
\item Any closed and discrete solution of \eqref{lamb} is obtained from its cycles by repeated application of all the $g_k.$
\item The number of solutions within $L$ is finite. 
\item There is a unique maximal solution of \eqref{lamb} within $L.$
\item A point $z\in L$ belongs to this maximal solution if and only if there are indices $k_1,...,k_p\in\{ 1,...,m\}$ such that $g_{k_p}^{-1}\cdots g_{k_1}^{-1}(z)$ belongs to a cycle of the IFS. 
\end{enumerate} 
\label{propo} \end{Proposition}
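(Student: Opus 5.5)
The argument rests on two uses of Lemma \ref{lemm}: inverse maps carry far-away points inward, and all cycles lie in $B_c(0)$. Everything happens inside the fixed closed discrete set $L$, so only finitely many points of $L$ lie in any ball. I will prove item 1 first, derive item 2 from it, and then deduce items 3--5 from a single finite ``cycle-reachability'' description.

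\emph{Items 1 and 2.} Let $\Lambda$ be a nonempty closed discrete solution and $z\in\Lambda$. Since $\Lambda=\bigcup g_k(\Lambda)$, there is $z_1\in\Lambda$ and an index $k_1$ with $g_{k_1}(z_1)=z$. Iterating gives a backward orbit $z,z_1,z_2,\dots$ in $\Lambda$ with $z_j=g_{k_{j+1}}(z_{j+1})$. By the last statement of Lemma \ref{lemm}, as long as $|z_j|>2c$ the modulus decreases by at least the fixed amount $\max|w_j|/|\beta|$. Hence the orbit enters $B_{2c}(0)$ after finitely many steps. Once $|z_j|\le 2c$ we have $|z_{j+1}|\le(2c+\max|w_j|)/|\beta|$, so the orbit stays in some fixed ball $B_r(0)$ from then on.

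This needs $\max|w_j|>0$; the case where all $w_k=0$ is trivial, since the only point in a bounded cycle is then $0$. The ball $B_r(0)$ contains only finitely many points of $\Lambda$, so the orbit must repeat: $z_i=z_j$ with $i<j$. Then $\{z_i,\dots,z_{j-1}\}$ is a cycle contained in $\Lambda$, which proves item 1. The same argument shows that every $z\in\Lambda$ is obtained from a cycle point $z_j$ by applying $g_{k_j}\cdots g_{k_1}$, which is item 2.

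\emph{Items 3--5.} Let $C$ be the set of cycle points of the IFS. By Lemma \ref{lemm}, $C\subset B_c(0)$. Cycle points lie in the closed discrete set generated by $R$, and $R$ is discrete in the relevant embedding for the cycles we consider, so we take $C\cap L$, which is finite. Define $M$ as the set of $z\in L$ for which some inverse word $g_{k_p}^{-1}\cdots g_{k_1}^{-1}(z)$ lies in $C$ and all intermediate points lie in $L$; this is the set described in item 5.

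I would check that $M$ is a solution. Any solution inside $L$ lies in $M$ by the argument for item 2. Conversely, applying any $g_k$ to a point of the form ``word applied to a cycle point in $L$'' keeps the point in $L$ only if we take the union of all solutions. So more precisely I set $M$ to be the union of all solutions contained in $L$. Unions of solutions are solutions, so $M$ is the unique maximal solution, which gives item 4. Its characterization in item 5 then follows from item 2 applied to $M$ itself.

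For item 3, every solution in $L$ is determined by the set of cycles it contains (item 2), and these are subsets of the finite set $C\cap L$. Hence there are finitely many solutions.

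The main obstacle is the precise bookkeeping in item 5. One must ensure that the maximal solution genuinely contains all forward images needed, so that $\Lambda\subseteq\bigcup g_k(\Lambda)$ and the reverse inclusion both hold inside $L$. I would handle this by defining $M$ as the union of all solutions in $L$ rather than constructively, and by using the finiteness of $C\cap L$ only for item 3.
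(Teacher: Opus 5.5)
Your items 1--4 are sound and follow the paper's route. The backward orbit $z, g_{k_1}^{-1}(z),\dots$ stays inside $\Lambda$, the last part of Lemma \ref{lemm} pushes it into $B_{2c}(0)$, and discreteness forces a repetition; item 4 is the union of all solutions. Deriving item 2 from this backward-orbit argument (every point of $\Lambda$ is a forward image of a cycle point of $\Lambda$) is more to the point than the paper's forward-iteration remark. It is also exactly what makes your injectivity argument $\Lambda\mapsto C\cap\Lambda\subseteq C\cap L$ for item 3 work. One side remark is wrong but not needed: $R$ is not discrete in $\CC$ (for instance $\ZZ[w]$ with $w=e^{2\pi i/5}$ is dense). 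You only need $C\cap L\subseteq B_c(0)\cap L$, which is finite because $L$ is closed and discrete.

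The gap is the ``if'' direction of item 5. Item 2 applied to $M$ only shows that each $z\in M$ is led by some inverse word to a cycle point \emph{of $M$}. It does not show that a point $z\in L$ which some inverse word leads to an arbitrary cycle of the IFS lies in $M$. For an arbitrary closed discrete $L$ this is false. Let $x_1$ be the fixed point of $g_1$, suppose $w_2\neq w_1$, and take $L=\{x_1\}$. Then $g_1^{-1}(x_1)=x_1$ is a cycle, but $g_2(x_1)=x_1+w_2-w_1\notin L$, so the maximal solution in $L$ is empty. Your first definition of $M$, requiring only the intermediate points to lie in $L$, does not repair this.

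The correct statement is: $z\in M$ iff some $g_{k_p}^{-1}\cdots g_{k_1}^{-1}(z)=x$ is a cycle point whose full forward orbit $O^+(x)=\{x\}\cup\{g_{j_1}\cdots g_{j_r}(x)\}$ lies in $L$. For ``if'', note three things: $O^+(x)$ contains the cycle of $x$, so every point of it has a preimage in it and it is a solution; it lies in $L$; and it contains $z=g_{k_1}\cdots g_{k_p}(x)$.

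Where the paper actually uses item 5, namely $L=\Lambda(A)$ and $z\in R$, this condition is automatic. Since $\beta$ is a unit, $x\in R$. By Proposition \ref{inclu}(i), $x'\in A$. Then $g'_k(A)\subseteq A$ together with \eqref{con} gives $O^+(x)\subseteq\Lambda(A)$. The paper's own one-line proof (``follows from 2.\ and 4.'') skips the same point. So you should state item 5 with this hypothesis and give the short argument above, rather than appeal to item 2.
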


\noindent {\it Proof. } 1. If $z_0$ is in $\Lambda,$ there is a $k_1$ with $z_0\in g_{k_1}(\Lambda).$ Thus $g_{k_1}^{-1}(z_0)=z_1\in\Lambda .$ There is a $k_2$ with $g_{k_2}^{-1}(z_1)=z_2\in\Lambda ,$ and so on. By the lemma, this sequence will enter the ball $B_{2c}(0)$ after finitely many steps. Since $\Lambda$ was closed and discrete, some $z_j$ must appear a second time. \  2.  The equation says that for each $x\in\Lambda$ the images $g_k(x)$ must also be in $\Lambda.$ So we get further points by application of the $g_k.$ Since $\Lambda$ is closed and discrete, the points will be outside $B_{2c}(0)$ after finitely many steps. In all further steps, $|g_k(y)|>|y|+d$ with $d=g_k(\frac{\max |z_j|}{|\beta|})$ by the last assertion of Lemma \ref{lemm}. So each iteration of the $g_k$ increases the distance from the origin by at least $d,$ and the algorithm produces a discrete set in the plane.  \ 3. This is true since all cycles of the expanding IFS lie in a bounded set. \  4. Take the union of all solutions. It could be empty if we make no further assumptions on $L.$ \ 5. This follows from 2. and 4. \hfill $\Box$\smallskip 

Note that cycles do not always come in isolated form.  They often have a network structure, as discussed in Section \ref{penta}. When we are within a closed discrete set, however, we can start with the union of all cycles and recursively apply the IFS to obtain the maximal solution of \eqref{lamb}. This is how we shall construct our set $\Lambda^*.$ However, we have not proved yet that the cycle network of the IFS $\{g_1,...,g_m\}$ is discrete. This will be done by the cut-and-project method, and we have to require that $\beta$ is a Pisot unit, so that $1/\beta$ is an algebraic integer and $g_k^{-1}(z)=(z-z_k)/\beta$ is defined as a mapping on $R.$

\section{The contracting IFS and the attractor window} \label{contra}
We fix a Galois automorphism $x'=\varphi (x).$ Then the expanding IFS \eqref{rifs} with $g_k(z)=\beta z+w_k$ has the conjugate IFS $g'_k(z)=\beta' z+w'_k$ for $k=1,...,m.$ Since $\beta$ is a Pisot number, this IFS is contracting, that means $|\beta'|<1.$ When we consider this IFS on the complex plane, it has a unique fractal attractor $A,$ defined by $A=\bigcup g'_k(A)$ \cite{BSS,Bar,Fal}. We assume that $A$ has interior points. This condition is not critical. It will be fulfilled if we choose sufficiently many mappings so that the $g'_k(A)$ cover a ball. We do not mind overlaps.

Let us complete the definition of the cut-and-project scheme which we began in Section \ref{conj}. We take $A\subset \overline{\pi'(\mathbb Z^d)}$ as the window of our cut-and-project scheme and define the model set of the window $A$ which has a very nice homogeneity properties. The following theorem holds for bounded windows $A$ with interior points.  Let 

\begin{equation}
\Lambda (A) = \{ x\in \pi(\mathbb Z^d)\, |\ x'\in A \} \ .
\label{laa}\end{equation}

\begin{Theorem}[Fundamental theorem on model sets \cite{Meyer72,Lag99,Mo97,Baake2013,So20,BY24}]\hfill
\begin{enumerate}
\item  $\Lambda (A)$ is uniformly discrete (there is a minimal distance $r>0$ between the points).
\item  If $A$ has non-empty interior, $\Lambda (A)$ is relatively dense (for some $R>0$, the balls of radius $R$ around its points cover the plane), and
\item  $\Lambda (A)$ is a Meyer set, that is, the difference set $\Lambda (A)-\Lambda (A)$ is also uniformly discrete.
\item If ${\rm int}\, A\not=\emptyset$ and the area of the boundary of $A$ is zero, $\Lambda(A)$ has pure point diffraction spectrum \cite[Theorem 9.4]{Baake2013}.
\end{enumerate}
\label{meyer}\end{Theorem}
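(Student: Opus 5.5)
The statement just above is the Fundamental theorem on model sets, which the paper cites from the literature. I would prove it in the setting of Section \ref{conj}, with a single conjugate: $\pi'\cdot\pi^{-1}$ is the Galois map $\varphi$, and $\Lambda(A)=\{x\in R : \varphi(x)\in A\}$ with $A$ compact. The common tool is the embedding $x\mapsto (x,x')$ of $R$ into $\CC\times\CC$. For a complex Pisot $\beta$ with $d=4$, this embedding is the lattice $\Gamma=\{(Bn,B'n): n\in\ZZ^d\}$ of rank $d$ in $\mathbb R^4$. It is discrete because the $4\times 4$ matrix $\binom{B}{B'}$ is invertible: its determinant is a Vandermonde-type expression in the distinct conjugates $\beta,\bar\beta,\beta',\bar\beta'$. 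In the cyclotomic case I would use the full Minkowski embedding of $R$ into $\CC^{d/2}$, with all conjugates, and treat the window as a product over the conjugates other than $\varphi$. The product structure is already announced for Section \ref{fini}, so I would state the argument for a single conjugate and note that it carries over verbatim.

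\textbf{Uniform discreteness and the Meyer property.} Suppose $x,y\in\Lambda(A)$. Then $x-y\in R$ and $(x-y)'\in A-A$, which is compact. So $\Lambda(A)-\Lambda(A)\subseteq\Lambda(A-A)$. Points of $\Gamma$ in the set $\{|u|\le 1\}\times(A-A)$ form a finite set, since $\Gamma$ is discrete and the set is compact. Hence there is an $r>0$ with $|x-y|\ge r$ for all $x\ne y$ in $\Lambda(A-A)$. This proves part 1. For part 3, note that $(\Lambda-\Lambda)-(\Lambda-\Lambda)\subseteq\Lambda(A-A+A-A)$, and the same argument shows this set is uniformly discrete.

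\textbf{Relative density.} Let $B_\varepsilon(a)\subseteq A$ be a ball in the interior. The key input is that $\varphi(R)$ is dense in $\CC$; this follows because the projection of $\Gamma$ to the second factor is dense, as no nonzero lattice vector has zero first component. By density and compactness, finitely many translates $\varphi(t_i)+B_\varepsilon(a)-a$, with $t_i\in R$, cover $\overline{\varphi(\Lambda(\,\cdot\,))}$-relevant compact sets. A cleaner standard route is to take a fundamental domain $F$ of $\Gamma$. For any $z\in\CC$, the translate $(z,a)+F$ scaled appropriately must contain a lattice point. Choose $F$ as a product $[\text{large box}]\times B_\varepsilon(0)$; this is possible since the covolume is finite and the second factor has nonempty interior. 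This yields $x\in R$ with $|x-z|\le R_0$ and $x'\in B_\varepsilon(a)\subseteq A$, which is part 2. This is the step I would expect to need care. I would do it via Minkowski's theorem: the set $\{|u|\le R_0\}\times B_\varepsilon(0)$ is convex and symmetric, with volume exceeding $2^4\,\mathrm{covol}(\Gamma)$ once $R_0$ is large, so it contains a nonzero lattice point. Then I would combine this with a density/Kronecker argument to place the point near an arbitrary $z$.

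\textbf{Pure point diffraction.} For part 4 I would not reprove anything. I would cite the general theorem \cite[Theorem 9.4]{Baake2013}. Its hypotheses are a compact window that is the closure of its interior, with boundary of measure zero, in a cut-and-project scheme with a lattice. The scheme above supplies the lattice and the dense injective projections, so only those hypotheses need checking.
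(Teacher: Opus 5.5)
\textbf{Parts 1, 3 and 4.} The paper does not prove this theorem; it only cites the literature, so your argument has to stand on its own. Parts 1 and 3 do. The inclusion $\Lambda(A)-\Lambda(A)\subseteq\Lambda(A-A)$ together with discreteness of $\Gamma$ is the standard proof. Strictly speaking, finiteness of $\Gamma\cap(\overline{B_1(0)}\times(A-A))$ shows only that the \emph{nonzero} elements of $\Lambda(A-A)$ have modulus at least $r$. That is exactly the separation of $\Lambda(A)$, not uniform discreteness of $\Lambda(A-A)$, but your part 3 handles the next level correctly. Part 4 is a citation, as in the paper.

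\textbf{The gap is part 2.} None of its three ingredients works as stated.

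(a) Injectivity of the physical projection on $\Gamma$ does not imply that the internal projection is dense. The lattice in $\mathbb{R}\times\mathbb{R}$ spanned by $(1,1)$ and $(\sqrt2,0)$ has injective first projection, yet its second projection is $\ZZ$. The correct criterion is injectivity of the physical projection on the dual lattice. Density of $\varphi(R)$ is true here, but it needs a different argument.

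(b) A fundamental domain of product form $[\text{box}]\times B_\varepsilon(0)$ does not exist in general. Finite covolume says nothing about whether translates cover: compare $\ZZ^2$ with the strip $\mathbb{R}\times(-\varepsilon,\varepsilon)$ for $\varepsilon<\frac12$.

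(c) Minkowski's theorem is homogeneous. It gives some nonzero $x\in R$ with $|x|\le R_0$ and $|x'|<\varepsilon$, i.e.\ one lattice point near the origin. Relative density is the inhomogeneous covering statement $\Gamma+\bigl(\overline{B_{R_0}(0)}\times B_\varepsilon(0)\bigr)=\CC\times\CC$. Your phrase ``combine with a density/Kronecker argument'' stands in for precisely this missing step.

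\textbf{A short fix in the spirit of the paper.} Since $\beta R\subseteq R$, the map $M(u,v)=(\beta u,\beta' v)$ satisfies $M^n\Gamma\subseteq\Gamma$. Let $P$ be a fundamental parallelepiped of $\Gamma$ with $P\subseteq\overline{B_D(0)}\times\overline{B_{D'}(0)}$. Then $M^nP$ is a fundamental domain of $M^n\Gamma$, so $\Gamma+M^nP=\CC\times\CC$. Moreover $M^nP\subseteq\overline{B_{|\beta|^nD}(0)}\times\overline{B_{|\beta'|^nD'}(0)}$. Choose $n$ with $|\beta'|^nD'<\varepsilon$, and write $(z,a)=(x,x')+p$ with $(x,x')\in\Gamma$ and $p\in M^nP$. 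This gives $x\in R$ with $|x-z|\le|\beta|^nD$ and $x'\in B_\varepsilon(a)\subseteq A$.

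The same argument extends to product windows by letting $M$ act diagonally by all the conjugates. Alternatively, prove density of $\varphi(R)$ correctly and then use compactness of $(\CC\times\CC)/\Gamma$ to extract a uniform $R_0$.
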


Now we can turn to the proof of Theorem \ref{main}. The main argument is the conjugacy of the expanding and contracting IFS:
\begin{equation}  (g_k(x))' =g'_k(x')  \quad\mbox{ for all $k=1,...,m$ and $x\in R$.}  \label{con} 
\end{equation}

\begin{Proposition} [Conjugates of discrete self-similar sets] \hfill
\begin{enumerate}
\item[ (i)] For each cycle $\{ x_1,...,x_n\}\subset R$ of the expanding IFS $g_1,...,g_m,$ the conjugate points $x'_1,...,x'_n$ form a cycle of the contracting IFS $g'_1,...,g'_m,$ and all the points $x'_j$ belong to the attractor $A.$
\item[(ii)] For each solution of the equation $\Lambda=\bigcup g_k(\Lambda)$ in $R,$ the conjugate set $\varphi( \Lambda)=\{ x'\,|\, x\in\Lambda\}$ is a dense subset of the attractor $A.$
\item[(iii)] There are only finitely many solutions of \eqref{lamb} in $R,$ and the union of these solutions is the maximal solution $\Lambda^*.$ 
\end{enumerate}
\label{inclu}\end{Proposition}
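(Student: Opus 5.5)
The plan is to reduce everything to the conjugacy relation \eqref{con}, combined with the standard characterization of the attractor $A$. The attractor is the closure of the set of fixed points of all finite compositions $g'_{k_1}\cdots g'_{k_p}$. Equivalently, it is the set of all limits $\lim_p g'_{k_1}\cdots g'_{k_p}(z_0)$ for an arbitrary starting point $z_0$.

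For (i), take a cycle $x_1,\dots,x_n$ with $g_{j_k}(x_k)=x_{k+1}$ and $g_{j_n}(x_n)=x_1$. Applying $\varphi$ and using \eqref{con} gives $g'_{j_k}(x'_k)=x'_{k+1}$ and $g'_{j_n}(x'_n)=x'_1$, so the conjugates form a cycle of the contracting IFS. In particular $x'_1$ is the fixed point of the contraction $h=g'_{j_n}\cdots g'_{j_1}$. Since $A$ is invariant and compact, $h(A)\subseteq A$, and the unique fixed point of $h$ lies in $A$ because $h^p(a)\to x'_1$ for any $a\in A$. The same holds for each $x'_k$, which is a fixed point of a cyclic shift of the composition.

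For (ii), I would use Proposition \ref{propo}. By items 1 and 2, every point $z$ of a closed discrete solution $\Lambda$ has the form $z=g_{k_p}\cdots g_{k_1}(x)$ with $x$ in a cycle. Then $z'=g'_{k_p}\cdots g'_{k_1}(x')$, and by (i) and the invariance of $A$ this lies in $A$, so $\varphi(\Lambda)\subseteq A$. For density, note that $\Lambda$ is invariant under every $g_k$, so $\varphi(\Lambda)$ contains $g'_{k_1}\cdots g'_{k_p}(x')$ for every word and any fixed cycle point $x'$. The diameter of $g'_{k_1}\cdots g'_{k_p}(A)$ is $|\beta'|^p\,{\rm diam} A$. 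Since $A=\bigcup_{|{\bf k}|=p} g'_{\bf k}(A)$, every point of $A$ is within $|\beta'|^p{\rm diam}A$ of some $g'_{\bf k}(x')$, which gives density. Here I should state that the solution is assumed closed and discrete, or argue without discreteness. For an arbitrary solution in $R$, the backward orbit argument of Proposition \ref{propo}.1 needs discreteness. Inclusion in $A$ might fail otherwise, so I would restrict to discrete solutions, as the section does.

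For (iii), the key step, and the main obstacle, is to show that the union of all solutions lies in a discrete set, so that Proposition \ref{propo}.3--4 applies with $L=\Lambda(A)$. By (ii), every discrete solution lies in $\Lambda(A)=\{x\in R: x'\in A\}$, and $\Lambda(A)$ is uniformly discrete by Theorem \ref{meyer}.1, since $A$ is compact and $\pi(\ZZ^d)$ is a lattice-projection. I expect the subtle point to be the case of several conjugates: there one uses the product of the attractors as the window so that the projection is injective with dense image and the scheme is a genuine cut-and-project scheme. I would remark on this and defer the details to Section \ref{fini}. Then Proposition \ref{propo}.3 gives finitely many solutions inside $L=\Lambda(A)$. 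Their union is a solution, and by item 4 it is the maximal solution $\Lambda^*$ within $L$. Since every discrete solution lies in $L$, this $\Lambda^*$ is maximal among all discrete solutions in $R$.
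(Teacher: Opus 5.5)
Your proposal is correct and follows essentially the same route as the paper. Conjugacy \eqref{con} transfers cycles and forward orbits, cycle points are fixed points of contracting compositions and hence lie in $A$, density follows from the shrinking images $g'_{k_1}\cdots g'_{k_p}(A)$ (which makes the paper's one-line density claim precise), and (iii) follows from Proposition~\ref{propo}.3--4 with $L=\Lambda(A)$ discrete by Theorem~\ref{meyer}. Your restriction to closed discrete solutions is the paper's standing convention from Section~\ref{expa}, and it is genuinely needed: $g_k(R)=\beta R+w_k=R$ for a unit $\beta$, so $R$ itself solves \eqref{lamb}, yet $\varphi(R)=R$ is not contained in the compact set $A$.
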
 

\noindent {\it Proof. } (i). The correspondence of cycles follows directly from \eqref{con}. A cycle of the contracting IFS fulfils $g'_{j_n}\cdots g'_{j_1}(x_1)=x_1.$ So $x_1$ belongs to $A$ since it is the fixed point of the mapping $g'_{j_n}\cdots g'_{j_1},$ and the images $x_2=g'_{j_1}(x_1)$ etc. also belong to $A.$ \ (ii). By Proposition \ref{propo}.2, a solution $\Lambda$ is obtained by starting with the cycle points $x_i$ and repeatedly applying the $g_k.$ The conjugate starting points $x'_i$ are in $A$, and the conjugate maps $g'_k$ are applied by \eqref{con}, which means that the conjugate points remain in the attractor.  Since each composition $g'_{k_n}\cdots g'_{k_1}$ is used in the procedure, for any $n,$ the set of points will become dense in $A.$\ (iii). This follows from Proposition \ref{propo} since all solutions are in the model set $\Lambda (A)$ which is discrete by Theorem \ref{meyer}. \hfill $\Box$\smallskip  

For Theorem \ref{main} it only remains to show that all points of $\Lambda (A)$ belong to a solution of the equation \eqref{lamb}. This is done in the next section.

\section{Product attractor and finiteness}\label{fini}
The fractal attractor of the $g'_k(z)=\beta' z+w'_k$ can have an intricate structure. We choose a ball $B_{c'}(0)\supset A$ as an auxiliary window. The following choice of the radius $c'$ will be used for Proposition \ref{proda}. 

\begin{Lemma} Let $c'=\frac{\max_i |w'_i|}{1-|\beta'|}. $ Then $|y|>c'$ implies $|(g'_k)^{-1}(y)|>|y|$ for $k=1,...,m.$  
\label{lemm2}\end{Lemma}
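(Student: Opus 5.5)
This is the conjugate analogue of the estimate in Lemma \ref{lemm}, and I would prove it the same way, by a triangle-inequality computation. First I record the explicit inverse. Since $\beta'\neq 0$ (it is a conjugate of a unit), each $g'_k$ is invertible with
\[ (g'_k)^{-1}(y)=\frac{y-w'_k}{\beta'} . \]
This gives
\[ |(g'_k)^{-1}(y)| \ \ge\ \frac{|y|-|w'_k|}{|\beta'|} \ \ge\ \frac{|y|-\max_i|w'_i|}{|\beta'|}. \]
So it suffices to show that this lower bound is strictly larger than $|y|$.

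That strict inequality is equivalent to $|y|-\max_i|w'_i| > |\beta'|\,|y|$, which rearranges to $|y|\,(1-|\beta'|)>\max_i|w'_i|$. Here we use that $0<|\beta'|<1$, which is exactly the Pisot assumption on the conjugate, so $1-|\beta'|>0$ and we may divide by it. The last inequality is then precisely $|y|>c'$, which proves the claim.

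I also note why the chain is legitimate. When $|y|>c'$ we have $|y|>\max_i|w'_i|$, because $c'\ge \max_i|w'_i|$ (the denominator $1-|\beta'|$ is at most $1$). Hence the numerator $|y|-|w'_k|$ is positive and the reverse triangle inequality gives a meaningful bound. This is the only point that needs care; the rest is routine algebra.

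As a remark for later use, the lemma implies that the ball $B_{c'}(0)$ is mapped into itself by every $g'_k$. Indeed, if $|x|\le c'$ but $|g'_k(x)|>c'$, then applying the lemma to $y=g'_k(x)$ would give $|x|>|g'_k(x)|>c'$, a contradiction. Therefore the attractor $A$ is contained in $B_{c'}(0)$, which justifies using this ball as the auxiliary window.
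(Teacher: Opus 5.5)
Your proof is correct and is essentially the paper's argument: write $(g'_k)^{-1}(y)=(y-w'_k)/\beta'$, bound $|y-w'_k|\ge |y|-\max_i|w'_i|$ by the reverse triangle inequality, and observe that $|y|>c'$ is just a rearrangement of $|y|-\max_i|w'_i|>|\beta'|\,|y|$. Your positivity check on the numerator is not needed, since the reverse triangle inequality holds regardless of sign. Your closing remark is correct: $B_{c'}(0)$ is invariant under every $g'_k$ and hence contains $A$, which is exactly how the paper uses the lemma to justify the auxiliary window.
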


\noindent {\it Proof. } $|y|>c'$ means  $|y|-\max |w'_i| >|\beta'|\cdot |y|,$ and $|(g'_k)^{-1}(y)|=\frac{|y-w'_k|}{|\beta'|} \ge \frac{|y|-\max |w'_i|}{|\beta'|}.$ 
\hfill $\Box$ \medskip

We now switch to the general case of a Pisot number $\beta$ with several conjugate roots or pairs of complex-conjugate roots $\beta_1, \beta_2,...,\beta_q,$  where for a pair of complex roots we take only the $\beta_k$ with positive imaginary part. For each $\beta_j$ we let $A_j\subset\CC$ be the attractor of the contracting IFS  $g_k^j(z)=\beta_j z +w_k^j, k=1,...,m$ which corresponds to the expanding IFS \eqref{rifs} under the Galois automorphism  $\varphi^j$ with $\varphi^j(\beta)=\beta_j.$ In particular, $w_k^j=\varphi^j(w_k).$  The window under these circumstances is the product set $A_1\times ...\times A_q,$ and the definition \eqref{laa} of the model set generalizes to
\begin{equation}
 \Lambda (A_1\times ...\times A_q) = \{ x\in \pi(\mathbb Z^d)\, |\ \varphi^j(x)\in A_j \mbox{ for }j=1,...,q \} . 
\label{lab}\end{equation}
For each $j,$ we choose a ball  $B_{c_j}(0)\supset A_j$ with $c_j=\frac{\max_i |w_i^j|}{1-|\beta_j|},$ according to Lemma \ref{lemm2}.\medskip

The purpose of the cut-and-project method in our approach is to determine the cyclic part of $\Lambda^*.$ The rest of $\Lambda^*$ is then constructed by iteration of the $g_k.$ Lemma \ref{lemm} says that the cyclic part of $\Lambda^*$ is contained in the ball $B_c(0).$

\begin{Proposition}
Only a finite subset of $R$ and of $\ZZ^d$ is involved in the cut-and-project calculation of the cyclic part of $\Lambda^*.$ Formally,
\begin{equation}
F=\{ n\in\ZZ^d \ |\ \pi (n)=x\in B_{2c}(0) \mbox{ and } \pi^j(n)=\varphi^j(x)\in B_{c_j}(0)\mbox{ for }j=1,...,q\} 
\label{znd}\end{equation}
is a bounded subset of $\ZZ^d.$ In other words, it is contained in the set $\ZZ_N^d$ defined in \eqref{zd}, for some positive integer $N.$
\label{propo2}\end{Proposition}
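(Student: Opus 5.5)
The set $F$ consists of integer vectors $n$ whose images under all the Galois embeddings are bounded. So the plan is to show that the combined embedding of $\ZZ^d$ into $\CC^{1+q}$ is injective and linear, and then invert it. Let $\Phi:\mathbb R^d\to \CC\times\CC^q$ be the real-linear map
\[\Phi(n)=\bigl(B n,\ B_1 n,\dots ,B_q n\bigr),\]
where $B$ is the basis matrix of Section \ref{conj} and $B_j$ is the basis matrix for the conjugate $\beta_j$ (or for the automorphism $\varphi^j$ in the cyclotomic case). I regard the target as the real vector space $\mathbb R^{2(1+q)}$, identifying a coordinate with $\mathbb R$ when the corresponding conjugate is real. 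Counting real dimensions, the embeddings of $K$ use exactly $d$ real coordinates: one for each real embedding, and one complex coordinate for each pair of complex ones. This is the standard Minkowski embedding. For real $\beta$ in the cyclotomic case, the first coordinate $\pi(n)$ is itself complex and accounts for the pair $z\mapsto z,\overline z$ singled out by condition \eqref{realcond}.

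\textbf{Key step: injectivity.} I will show that $\Phi$ has trivial kernel on $\mathbb R^d$, so that it is a linear isomorphism onto its image. The standard argument goes as follows. The $d\times d$ matrix whose rows are the values of the $d$ embeddings $\sigma_1,\dots,\sigma_d$ of $K$ on the chosen basis $b_1,\dots,b_d$ has squared determinant equal to the discriminant of that basis, which is nonzero because $K/\mathbb Q$ is separable. For $\mathbb Z[\beta]$ with basis $1,\beta,\dots,\beta^{d-1}$ this matrix is a Vandermonde matrix in the distinct roots of $p$, so its determinant is nonzero. Passing from a complex-conjugate pair of rows $(\sigma,\overline\sigma)$ to the real rows $(\mathrm{Re}\,\sigma,\mathrm{Im}\,\sigma)$ changes the determinant only by a nonzero factor, so the real $d\times d$ matrix built from $B,B_1,\dots,B_q$ is invertible. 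In the real-Pisot cyclotomic case I also need to check that every automorphism of $K$ appears among $\mathrm{id}$, complex conjugation and the $\varphi^j$, up to complex conjugation. By \eqref{realcond}, each automorphism is distinguished by its value on $\beta$ up to conjugation, so I will argue that the list of $\varphi^j$ runs over all of them. I expect this bookkeeping step to be the main obstacle: the paper's notation for ``several conjugates'' must exhaust all embeddings, otherwise the kernel could be nontrivial and $F$ infinite. If the list does not exhaust the embeddings, I would simply enlarge it. Adding embeddings only shrinks $F$ further, and since every $A_j$ lies in its ball $B_{c_j}(0)$ by Lemma \ref{lemm2}, the description of the cyclic part is unaffected.

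\textbf{Conclusion.} Once $\Phi$ is known to be invertible, let $M=\Phi^{-1}$ with operator norm $\|M\|$ for the max-norm on $\mathbb R^d$. Any $n\in F$ satisfies $|Bn|\le 2c$ and $|B_jn|\le c_j$, so
\[\|n\|_\infty\le \|M\|\,\bigl(2c+\textstyle\sum_j c_j\bigr).\]
Taking $N$ to be the integer part of this bound gives $F\subseteq\ZZ^d_N$. Since $\pi$ is one-to-one, the corresponding subset $\pi(F)$ of $R$ is finite as well.
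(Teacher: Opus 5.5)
Your proposal is correct and follows the paper's argument. The paper also uses that $\pi$ and the $\pi^j$ are projections onto complementary subspaces (eigenspaces of the companion matrix), so that a set with bounded projections is bounded; it gets this by compactness, where you invert the isomorphism with an operator norm. Your Vandermonde/discriminant computation, and your use of \eqref{realcond} to see that the $\varphi^j$ exhaust all embeddings up to complex conjugation, make explicit what the paper only asserts.

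One caveat: the fallback of ``enlarging the list'' is unnecessary and would not be valid anyway. It would only prove boundedness of a smaller set than $F$, and whenever the list is not exhaustive some missing embedding fixes $\beta$, which gives an expanding IFS with no attractor and no ball from Lemma \ref{lemm2}.
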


\noindent {\it Proof. } We have here a finite-dimensional vector space $V=\CC^d\supset\ZZ^d$ and projections $\pi, \pi^j=\varphi^j\pi$ onto complementary subspaces. Actually the basic map $g(x)=\beta x$ can be written as the action of the companion matrix of the polynomial $p$ on $V,$ and the projections can be interpreted as projections onto the eigenspaces of that matrix.  When a set has bounded projections in each of the subspaces, it must itself be bounded. The simplest argument uses compactness: any sequence in the set has a subsequence for which the projections in the subspaces converge. Then the subsequence itself must also converge.
\hfill $\Box$ \medskip

In the sections \ref{expa} and \ref{contra} we proved that a point $x\in R$ belongs to $\Lambda^*$ if and only if a finite sequence of inverse maps $g_k^{-1}$ lead $x$ to a cycle. A similar statement can now be given for the product attractor in the direct sum of contracting eigenspaces. For $x\in R,$ let $y=(y_1,...,y_q)$ with $y_j=\varphi^j(x).$ Actually we should write $y=y_1+...+y_q$ in order to underline that $y\in R.$ Our notation focusses on the componentwise application of the contracting IFS $\{ g^j_k\}$ for $j=1,...,q.$

\begin{Proposition} \begin{enumerate}
\item Let $x\in R.$ The conjugate point $y=(y_1,...,y_q)$ with $y_j=\varphi^j(x)$ belongs to the attractor $A_1\times ...\times A_q$ if and only if there are indices $k_1,...,k_p\in\{ 1,...,m\}$ such that $(g^j_{k_p})^{-1}\cdots (g^j_{k_1})^{-1}(y_j)$ belongs to a cycle of the IFS $\{ g^j_1,...,g^j_m\}$ for every $j=1,...,q.$  The indices defining the cycle must be the same for all $j.$
\item $\Lambda^*=\Lambda (A_1\times ...\times A_q).$  
\end{enumerate}
\label{proda}\end{Proposition}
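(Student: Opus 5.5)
The plan is to prove part 1 first and then derive part 2 from it together with Proposition \ref{propo} and Proposition \ref{propo2}. Throughout I use that $\beta$ is a unit, so $g_k^{-1}(x)=(x-w_k)/\beta\in R$ for $x\in R$. Applying $\varphi^j$ to this identity gives $\varphi^j(g_k^{-1}(x))=(g^j_k)^{-1}(\varphi^j(x))$, the inverse form of \eqref{con}. Hence backward orbits in $R$ under a fixed index sequence are carried simultaneously into all $q$ conjugate planes with the \emph{same} indices. This is why the indices in part 1 are necessarily common to all $j$.

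\textbf{The ``if'' direction of part 1.} Assume $x\in R$, with $z=g_{k_p}^{-1}\cdots g_{k_1}^{-1}(x)\in R$, and that each $\varphi^j(z)$ lies on a cycle of $\{g^j_k\}$. By Proposition \ref{inclu}(i), applied for each $j$, every $\varphi^j(z)$ lies in $A_j$. Each $A_j$ is invariant under the maps $g^j_k$. Applying $g^j_{k_1}\cdots g^j_{k_p}$ to $\varphi^j(z)$ therefore returns $y_j=\varphi^j(x)\in A_j$ for every $j$, so $y\in A_1\times\dots\times A_q$.

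\textbf{The ``only if'' direction of part 1.} Assume $x\in R$ and $\varphi^j(x)\in A_j$ for all $j$. I would build a backward orbit $x=x_0,x_1,x_2,\dots$ in $R$ with $x_{i+1}=g_{k_{i+1}}^{-1}(x_i)$ such that every $\varphi^j(x_i)$ stays in $A_j$, hence in $B_{c_j}(0)$. By Lemma \ref{lemm}, the moduli $|x_i|$ decrease until $x_i\in B_{2c}(0)$. From then on all $x_i$ lie in the finite set $\pi(F)$ of Proposition \ref{propo2}. Some $x_i$ must therefore repeat, which produces a cycle of the expanding IFS. Its conjugates are cycles of each $\{g^j_k\}$ with common indices, which is exactly the conclusion.

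The whole difficulty is the existence of a \emph{common} index $k_{i+1}$ at each step. For one $j$, $y_j\in A_j=\bigcup_k g^j_k(A_j)$ yields some $k$, but different $j$ may demand different $k$. This is the main obstacle, and for $q=1$ it does not arise. For $q\ge2$, I would first try to exploit the fact that $x$ and all its conjugates are coupled through the single point $x\in R$. Consider the set $S$ of $x\in R$ with all $\varphi^j(x)\in A_j$. I would show that $S$ is closed under some common inverse step, arguing via the density statement of Proposition \ref{inclu}(ii) applied jointly: the points $(\varphi^1(u),\dots,\varphi^q(u))$, $u\in\Lambda^*$, accumulate on the window. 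Combined with the uniform discreteness from Theorem \ref{meyer}, this should force each $x\in S$ to coincide with a point of $\Lambda^*$, and points of $\Lambda^*$ do have common backward addresses. If this coupling argument fails, the step genuinely needs the window to carry joint addresses, and I would make that explicit.

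\textbf{Part 2.} By part 1 and Proposition \ref{propo}.5, both sets consist of the points of $R$ that reach a cycle of the IFS through finitely many common inverse maps. Part 1 identifies $\Lambda(A_1\times\dots\times A_q)$ with this set. By Proposition \ref{propo}.5, $\Lambda^*$ is the same set within the discrete set $\Lambda(A_1\times\dots\times A_q)$. Hence the two sets coincide, and the inclusion $\Lambda^*\subseteq\Lambda(A_1\times\dots\times A_q)$ from Proposition \ref{inclu} becomes an equality. This completes Theorem \ref{main}.
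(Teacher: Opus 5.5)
Your argument for $q=1$ is complete, and it is tighter than the paper's. The paper runs an \emph{arbitrary} inverse sequence and, when that sequence leaves $\pi(F)$, concludes $y_j\notin A_j$. That does not follow, because another sequence might stay inside. Choosing $k_{i+1}$ from an address of $y$ in $A=\bigcup_k g'_k(A)$, as you do, is the right way to get the ``only if'' direction. Your ``if'' direction and Part 2 are fine.

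The genuine gap is the common index for $q\ge 2$, and the coupling argument you sketch cannot close it, for two reasons.
\begin{enumerate}
\item Applying Proposition~\ref{inclu}(ii) jointly only shows that the vectors $(\varphi^1(u),\ldots,\varphi^q(u))$, $u\in\Lambda^*$, are dense in the attractor $\tilde A\subset\CC^q$ of the diagonal IFS $G_k=(g^1_k,\ldots,g^q_k)$, $k=1,\ldots,m$. In general $\tilde A$ is a proper subset of $A_1\times\cdots\times A_q$.
\item Density of conjugates plus uniform discreteness never forces a lattice point with conjugate in the window into a given solution. The solution $\Lambda_1$ of Figure~\ref{ff1}a has conjugates dense in $A$ by Proposition~\ref{inclu}(ii), yet it is a proper subset of $\Lambda(A)$.
\end{enumerate}

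In fact no argument can close this step, because for $q\ge 2$ the claim is false; your suspicion that joint addresses are needed is exactly right. Running your backward-orbit argument with an address of $y$ in $\tilde A$ keeps all coordinates in the balls $B_{c_j}(0)$ simultaneously. It proves $\Lambda^*=\Lambda(\tilde A)$, and that is the correct statement.

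Here is a counterexample to the product version. Take $w=e^{2\pi i/7}$ and the Pisot unit $\beta=1+w+\overline{w}\approx 2.247$, with conjugates $\beta_1\approx 0.555$ and $\beta_2\approx -0.802$. Use the four maps with $w_k\in\{0,1,w,1+w\}$.
\begin{itemize}
\item Then $A_j=I_j+\varphi^j(w)\,I_j$, where $I_j=\{\sum_{n\ge 0}\beta_j^n a_n : a_n\in\{0,1\}\}$ is an interval because $|\beta_j|\ge\frac12$. So $A_1\times A_2$ has interior, and $\Lambda(A_1\times A_2)$ is relatively dense.
\item Every discrete solution $\Lambda$ of \eqref{lamb} satisfies $\#(\Lambda\cap B_r(0))\le m\,\#(\Lambda\cap B_{(r+a)/\beta}(0))$ with $a=\max_k|w_k|$. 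Hence $\#(\Lambda\cap B_r(0))=O(r^{\log 4/\log\beta})$, with exponent below $2$ because $4<\beta^2\approx 5.05$.
\item So no solution is relatively dense, and $\Lambda^*\neq\Lambda(A_1\times A_2)$. Equivalently, $\tilde A$ is Lebesgue-null, since the linear part of each $G_k$ has determinant $\beta_1^2\beta_2^2=\beta^{-2}$ and $4\beta^{-2}<1$.
\end{itemize}

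The paper's proof has the same hole, hidden in its case distinction. For $q\ge 2$ the window must be $\tilde A$, not the product. All examples in the paper have $q=1$, where your proof is complete.
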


\noindent {\it Proof. } 1. We apply an arbitrary sequence of $g_{k_i}^{-1}, i=1,2,...$ with $k_i\in\{ 1,...,m\}$ to $x.$ By Lemma  \ref{lemm} the resulting point $x_\ell=g_{k_\ell}^{-1}\cdots g_{k_1}^{-1}(x)$ is in $B_{2c}(0)$ after a finite number $\ell$ of steps and will stay there in all following steps. Now there are two cases. Either $x_\ell, x_{\ell+1}, ...$ all belong to the set of $\pi(F)$ defined in \eqref{znd}. Then one of the $x_i$ must appear a second time since the set is finite. In this case we have found a cycle for the $g_k.$ By the conjugacy rule $\varphi^j(g_k(x)) =g^j_k( \varphi^j(x)),$ which also holds for inverses, this yields a cycle for the maps $g^j_k$ for each $j.$ As in the proof of Proposition \ref{inclu}, we conclude that each $y_j$ belongs to $A_j,$ and $y$ belongs to the product attractor.  \ The second case is that some $x_i$ with $i\ge\ell$ does not belong to $\pi (F).$ Since $x_i$ is in $B_{2c}(0),$ the definition of $F$ implies that $\varphi^j (x_i)$ is outside $B_{c_j}(0)$ for some $j.$ Further iteration by the $(g_k^j)^{-1}$ will not lead back because of Lemma \ref{lemm2}. So in this case $y_j$ is outside $A_j,$ and $y$ is outside $A_1\times...\times A_q.$

2. We actually proved that $y$ belongs to the product attractor if and only if $x$ belongs to $\Lambda^*,$ according  Proposition \ref{propo}, 5. This completes the proof of Proposition \ref{proda} and of Theorem \ref{main}. 
\hfill $\Box$

\section{The algorithmic approach}\label{algo}
The construction of $\Lambda^*$ involves three steps which we shall describe in detail for the case of a single attractor $A.$

\begin{enumerate}
\item Determine a finite set $F_0\subset R$ which contains the cyclic part of $\Lambda^*.$
\item Clean this set to obtain the proper subset $F_1=F_0\cap \Lambda^*.$
\item Recursively extend $F_1$ to $\Lambda^*\cap B_\rho(0)$ for a given bound $\rho >0.$ 
\end{enumerate}

{\bf Step 1. } We know from Lemma \ref{lemm} that we can assume $F_0\subset B_c(0)$ with $c=\frac{\max_j |w_j|}{|\beta|-1}. $  We choose a $c'$ such that $B_{c'}(0)\supset A.$ The value of Lemma \ref{lemm2} is admissible, and in the pentagonal example it is a good choice. Next, we determine an integer $N$ with
\begin{equation}  \{ n\in\ZZ^d \ |\ \pi (n)=x\in B_c(0) \mbox{ and } \pi'(n)\in B_{c'}(0)\} \subset \ZZ^d_N \ .
\label{znd2}\end{equation} 
Proposition \ref{propo2} says that such $N$ exists. Now $N$ must be really small, since $\ZZ^d_N,$ defined in \eqref{znd}, contains $\nu=(2N+1)^d$ vectors which should fit into our computer. At this point the problem can turn out to be intractable, in particular for Pisot numbers of large degree $d.$

For small $N,$ the rest of the calculation will cause no difficulties. We set up the Galois correspondence between points $\pi(n)$ and $\pi'(n)$ by calculating the $2\times \nu$ matrices $X$ and $X'$ described in Section \ref{conj}. Then we select those columns $x_{1,k}\choose x_{2,k}$ of $X$ which have modulus smaller than $c,$ and for which the corresponding
columns $x'_{1,k}\choose x'_{2,k}$ of $X'$ have modulus smaller than $c'.$ These vectors form the set $F_0.$ \medskip

{\bf Recursive extension. } This algorithm will already be applied in step 2.
We want to find the $g_k$-images of the given initial points, and the $g_k$-images of the new points, and so on, as long as their modulus is not larger than $\rho .$ We know from Proposition \ref{propo} that there are only finitely many such points. However, we have to care for the overlaps. Since we get the same point several times, we must search and remove duplicates, which then can pop up again in the next step.  We better construct our list $L$ of points, a $2\times n$ matrix, point by point.

We start with the initial list  $L=\{ z_1,...,z_{n_0}\}$ \  and set $j=1$ and $n=n_0.$ 
Our working index $j$ will never exceed the number $n$ of points in the set $L.$ When $j>n,$ the program stops.

So suppose that we have  $L=\{ z_1,...,z_n\}$  and $j\le n.$ Then we consider the $m$ successors $g_1(z_j),..., g_m(z_j).$
For each of these points we first check whether it is in our list $L.$ Only if it is not in the list, and its modulus is not greater than $\rho,$ we add the point as $z_{n+1}$ to our list and increase $n$ by 1. Then we take the next successor. If all successors of $z_j$ have been considered, we increase the working index $j$ by 1, and take the new $z_j,$ unless $j>n.$
\medskip

{\bf Step 2. } We apply the recursive extension to the set $F_0$ with $\rho=c,$ with the following modification. The list $L$ is extended by $m+1$ further columns. We store for each $z_\ell$ the column indices of the $m$ successors $g_k(z_\ell)$ which is done at the step $j=\ell .$ In the last row of column $\ell,$ we store the number of predecessors of $z_\ell.$ Thus the last row is initially zero. Whenever $z_\ell=g_k(z_j)$ for some $j\ge 1$ and some $k\in\{ 1,...,m\},$ the last entry of column $\ell$ is increased by 1. In the step 2 the recursive extension algorithm will not find new points. It is used to acquire information on successors and predecessors.

Now we consider all points of the list which have no predecessor. According to Proposition \ref{propo}, they do not belong to $\Lambda.$ These columns will be removed. However, before we remove one of them, we first go to its successors (given in rows 3 to m+2) and decrease their predecessor number in the last row by 1. After removal of the columns we will have new points without predecessors. We repeat the procedure until all remaining points have at least one predecessor. Then the criterion 5. of Proposition \ref{propo} is fulfilled, and our set of points is $F_1=\Lambda\cap F_0.$

{\bf Step 3. } We apply the recursive extension algorithm to $F_1$ and the given $\rho , $ for instance $\rho=30$ for Figure \ref{ff2} below. The successor numbers will not be stored.  However, the number of predecessors will be kept for each point because it is used for the decoration of the points. Since the $g_k$ are invertible maps, each point has at most $m$ predecessors, and at least one.

{\bf Implementation. } This paper is exploratory work. The algorithm was realized by simple numerical MATLAB procedures, and interactive visualizations were used to understand the structure of the patterns. During the calculations, numerical errors grew but stayed well below $10^{-8}.$ Actually, numerical calculation with complex numbers is needed only in step 1. The recursion can be performed on the lattice $\ZZ^d$ by integer arithmetics without any numerical error, cf. \cite[Section 3]{EFG3}. The projection to $\CC$ would then be the final step.

\begin{figure}[h!t]
\begin{center}
\includegraphics[width=0.4\textwidth]{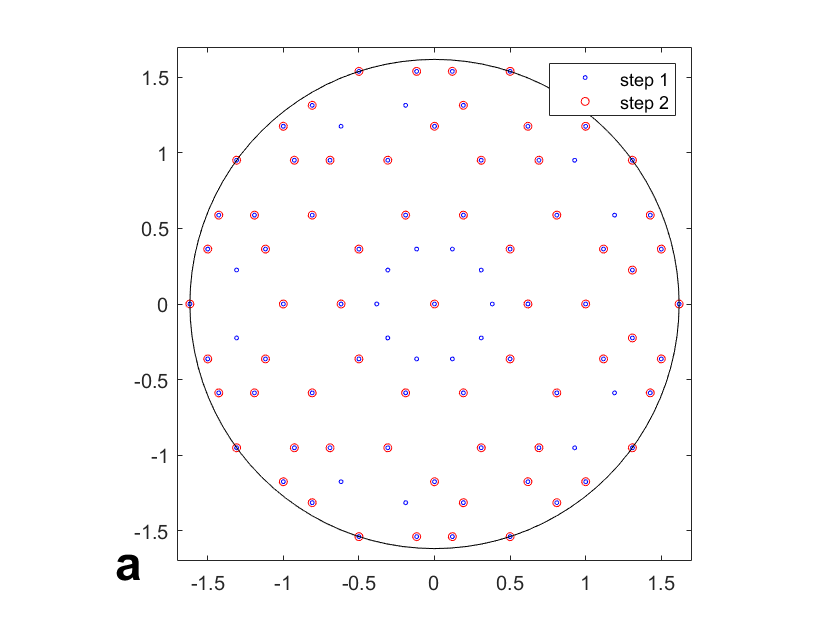} \  \  \includegraphics[width=0.4\textwidth]{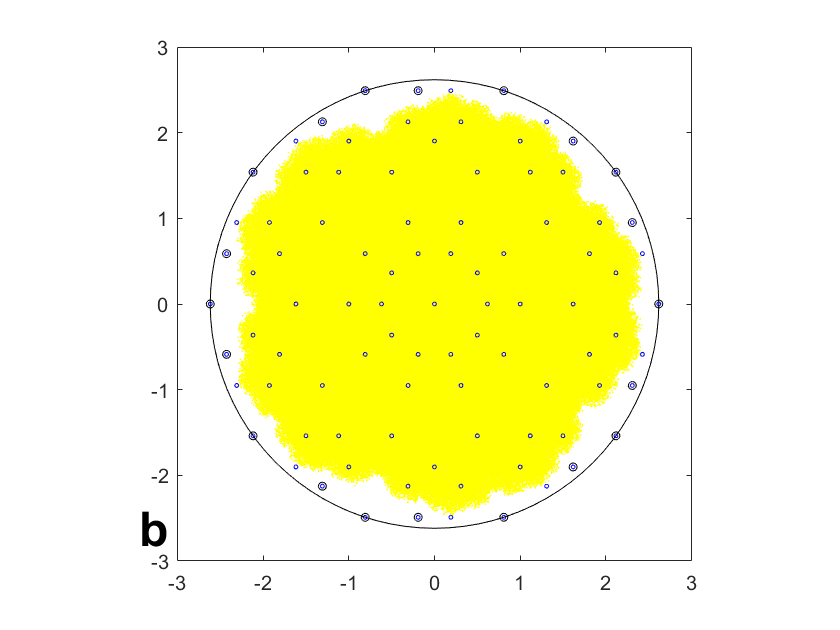} \\
{\bf c} \includegraphics[width=0.36\textwidth]{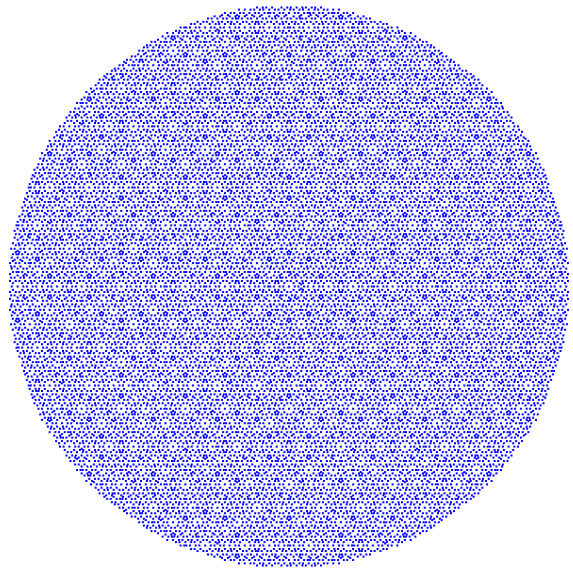}  \  \  {\bf d} \includegraphics[width=0.36\textwidth]{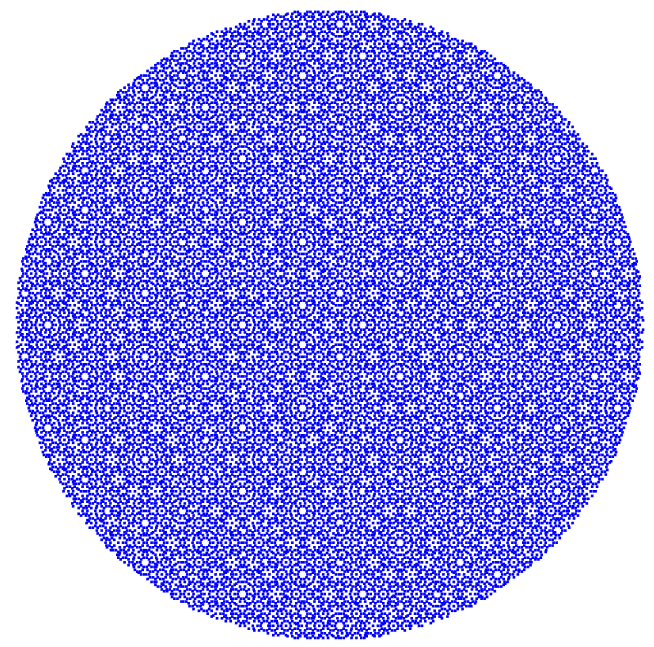}
\end{center}
\caption{ {\bf a} shows the first two steps in the construction of the basic pentagonal pattern. The 91 points were projected from the ball around the yellow attractor $A$ in the conjugate window {\bf b}. The 71 points approved in the step 2 are circled in red. They include the cyclic part of $\Lambda^*.$ The other 20 points do not belong to $\Lambda^*.$ They correspond to the thick black points outside $A$ in {\bf b}. In step 3, the pattern was extended to the ball with radius $\rho=30$ in {\bf c} and $\rho=40$ in {\bf d}.}\label{ff2}      
\end{figure}

\section{The basic pentagonal example}\label{penta}
The algorithm will now be demonstrated for the example introduced in Section \ref{moti}: $g_k(z)=\tau z +w^k, k=1,...,5$ with $\tau\approx 1.618$ and $w=w^1=e^{2\pi i/5}.$  As mentioned in Section \ref{conj}, the basis is taken as $\{ w,w^2,w^3,w^4\}.$ The Galois automorphism $x'=\varphi (x)$ is defined by $\varphi (w)=w^2.$  Since $\tau=1+w+\overline{w},$ we have $\varphi (\tau)=-t$ with $t=\tau -1=1/\tau .$ Thus the conjugate system is $g'_k(z)=-tz+w^k, k=1,..,5$ where a renumbering was performed ($g'_2$ is conjugate to $g_1$ and $g'_4$ to $g_2$ etc.).
The constant of Lemma \ref{lemm} is $c=\frac{1}{|\tau|-1}=\tau ,$ and Lemma \ref{lemm2} gives $c'=\frac{1}{1-|-t|} =\tau^2=1+\tau .$ 

Now we determine the number $N$ for \eqref{znd2}. A simple experimental method is to determine for $N=1,2,...$ the number of $n$ with $\pi(n)\in B_c(0)$ and $\pi'(n)\in B_{c'}(0)$ and check where this number stabilizes. In this particular example $N=2$ is sufficient.  We provide an analytical argument which will apply to all the examples below.

\begin{Lemma} For the cyclotomic field generated by $w^1,...,w^4$ and arbitrary $c$ and $c',$ the coordinates $n_1,...,n_4$ in \eqref{znd2} fulfil 
\begin{equation}
4\sum_i n_i^2 -2\sum_{i<j} n_in_j  =\sum_i n_i^2 + \sum_{i<j} (n_i-n_j)^2 \le 2(c^2+c'^2) .\label{nisum}
\end{equation}
\label{lemm3}\end{Lemma}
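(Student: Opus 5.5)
The plan is to compute $|x|^2+|x'|^2$ for $x=\pi(n)=\sum_{k=1}^4 n_k w^k$ and $x'=\pi'(n)$, where $\pi'$ is induced by $\varphi(w)=w^2$. Since $x\in B_c(0)$ and $x'\in B_{c'}(0)$, we have $|x|^2+|x'|^2\le c^2+c'^2$. It therefore suffices to prove the identity
\[ |x|^2+|x'|^2 = \tfrac12\Big(4\sum_i n_i^2 - 2\sum_{i<j} n_in_j\Big). \]
The second equality in \eqref{nisum} is pure algebra: expanding $\sum_{i<j}(n_i-n_j)^2$ gives $3\sum_i n_i^2-2\sum_{i<j}n_in_j$, since each index lies in three pairs.

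To obtain the identity, I use the full Galois orbit. The four automorphisms $w\mapsto w^\ell$, $\ell=1,2,3,4$, send $x$ to $x,x',\overline{x'},\overline{x}$. Hence
\[ \sum_{\ell=1}^4 \Big|\sum_k n_k w^{k\ell}\Big|^2 = 2\big(|x|^2+|x'|^2\big). \]
Expanding the left side gives $\sum_{j,k} n_j n_k \sum_{\ell=1}^4 w^{(j-k)\ell}$. The inner sum equals $4$ if $j=k$ and $-1$ otherwise, because $\sum_{\ell=0}^4 w^{r\ell}=0$ for $r\not\equiv 0 \pmod 5$. So the left side is $4\sum_i n_i^2-\sum_{j\neq k}n_jn_k = 4\sum_i n_i^2-2\sum_{i<j}n_in_j$. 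Combining this with the bound $|x|^2+|x'|^2\le c^2+c'^2$ gives exactly \eqref{nisum}.

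The main point needing care is the identification of the four conjugates with $x,x',\bar x',\bar x$. Here $\varphi$ (with $\ell=2$) gives $x'$. Complex conjugation, $\ell=4$, gives $\bar x$, since the $n_k$ are real. The composite $\ell=3$ gives $\overline{x'}$. These are all distinct automorphisms of $\mathbb Q(w)$, so nothing is double-counted.

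The quadratic form $\sum n_i^2+\sum_{i<j}(n_i-n_j)^2$ is positive definite. Consequently \eqref{nisum} bounds every $|n_i|$, which gives the concrete $N$ required in \eqref{znd2}.
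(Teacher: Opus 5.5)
Your proof is correct, and it reaches the paper's identity $2(|x|^2+|x'|^2)=4\sum_i n_i^2-2\sum_{i<j}n_in_j$ by a somewhat different route.

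The paper writes $u=\pi(n)$ and $u'=\pi'(n)$ explicitly in terms of $e^{\pm i\alpha},e^{\pm 2i\alpha}$ with $\alpha=\pi/5$. It then expands $|u|^2$ and $|u'|^2$ separately; the products $n_in_j$ receive the coefficients $2\cos 2\alpha$ and $-2\cos\alpha$ in complementary ways in the two expansions. Adding them and using $\cos 2\alpha-\cos\alpha=-\frac12$ gives the identity.

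You instead sum $|\sigma(x)|^2$ over the whole Galois orbit and use $\sum_{\ell=1}^4 w^{r\ell}=-1$ for $r\not\equiv 0 \pmod 5$. Arithmetically this is the same fact, since $\cos 2\alpha-\cos\alpha=-\frac12$ is just $w+w^2+w^3+w^4=-1$ written in real form.

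Your organization has three advantages:
\begin{itemize}
\item it explains why every cross term ends up with the same coefficient $-1$, whatever $j-i$ is;
\item it avoids the bookkeeping of which pair gets which cosine;
\item it generalizes at once: for any prime $p$ and $x=\sum_{k=1}^{p-1}n_kw^k$ it gives $\sum_\sigma|\sigma(x)|^2=p\sum_i n_i^2-\bigl(\sum_i n_i\bigr)^2$. This is what a product window with several conjugates (e.g.\ $n=7$) would need. For $p=5$ it is the left-hand side of \eqref{nisum}.
\end{itemize}

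The paper's explicit expansion, in exchange, yields separate formulas for $|u|^2$ and $|u'|^2$. These could exploit the constraints $|u|\le c$ and $|u'|\le c'$ individually rather than only through their sum, although the lemma itself uses only the sum.
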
\vspace{-3ex}

\noindent {\it Proof. } Putting $\alpha=\pi/5,$ the basis vectors can be expressed with $\alpha$ and $2\alpha .$ A point $u=\pi (n_1,n_2,n_3,n_4)$ has the form  $u=n_1e^{2i\alpha}-n_2e^{-i\alpha}-n_3e^{i\alpha}+n_4e^{-2i\alpha}.$
We calculate $|u|^2=u\overline{u}.$
\[ |u|^2=n_1^2+n_2^2+n_3^2+n_4^2+2\cos 2\alpha (n_1n_2+n_2n_3+n_3n_4)-2\cos\alpha (n_1n_3+n_2n_4+n_1n_4) \]
The same  can be done for the conjugate number $u'=-n_1e^{-i\alpha}+n_2e^{-2i\alpha}+n_3e^{2i\alpha} -n_4e^{i\alpha}.$
\[ |u'|^2=n_1^2+n_2^2+n_3^2+n_4^2+2\cos 2\alpha (n_1n_3+n_2n_4+n_1n_4)-2\cos\alpha (n_1n_2+n_2n_3+n_3n_4) \]
It is required that $|u|^2\le c^2$ and $|u'|^2\le c'^2.$ We ask how large the $n_i$ can be if $|u|^2+|u'|^2\le c^2+c'^2.$ Since $\cos 2\alpha -\cos\alpha=-\frac12 ,$ the term $2(|u|^2+|u'|^2)$ agrees with the left-hand side of \eqref{nisum}.
\hfill $\Box$ \medskip

In our example, $2(c^2+c'^2)\le 19$ while the smallest value of the left-hand side of \eqref{nisum} with $n_1=3,$ say, is 24. So it is enough to consider $N=2,$ which involves $\nu =5^4=625$ vectors $n.$
There are 91 cases for which $\pi(n)$ is in $U_c(0)$ and $\pi'(n)$ in $U_{c'}(0).$ After the cleaning step 2 we are left with 71 points, shown in Figure \ref{ff2}a. The conjugates of the 20 points which we removed are in $B_{c'}(0)$ outside the fractal attractor $A,$ drawn in yellow in Figure \ref{ff2}b. It would have been more difficult to study the boundary of $A$ and eliminate the points already in step 1. In step 3 we extended the set $\Lambda^*$ to the circle around 0 with radius $\rho=30,$  containing more than 8000 points (Figure \ref{ff2}c).

\begin{figure}[h!t]
\begin{center}
\includegraphics[width=0.35\textwidth]{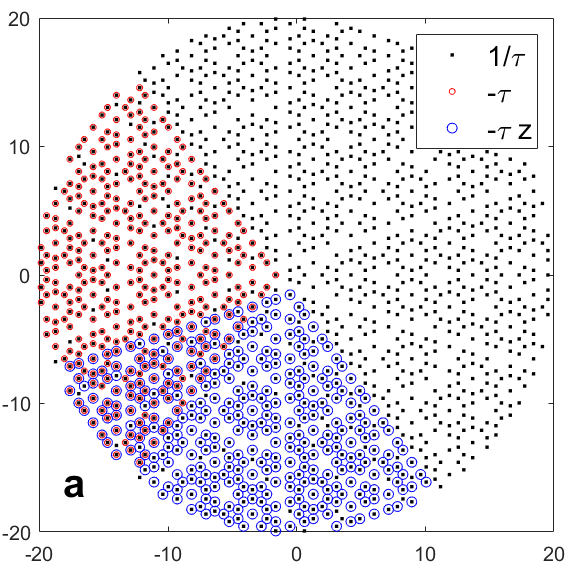} \   \includegraphics[width=0.3\textwidth]{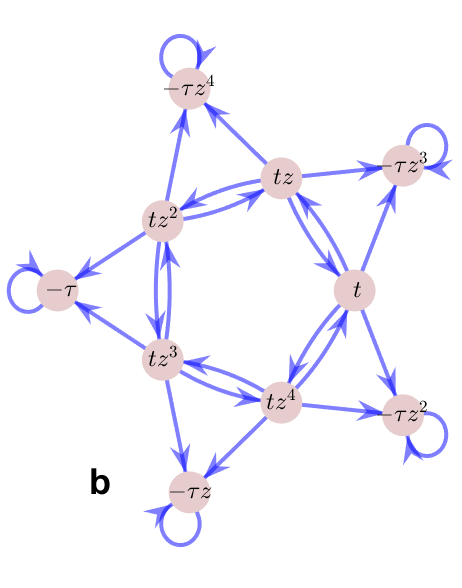} \ \
\includegraphics[width=0.29\textwidth]{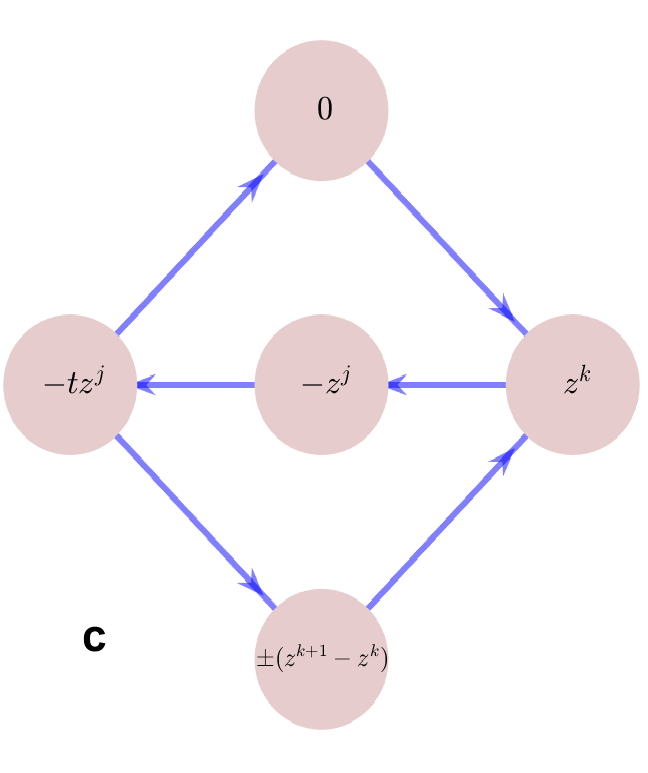}
\end{center}
\caption{The recursive construction was applied in {\bf a} to three different starting points. {\bf b} shows the cycles established by the IFS between these points, which together form the cyclical part of $\Lambda_2.$ {\bf c} represents the cyclical part of $\Lambda_1,$ the larger component of the cyclic structure of $\Lambda^*.$ Except for 0, each vertex stands for a group of 5 or 10 points.}\label{ff3}      
\end{figure}

We now discuss the structure of the cyclic part of $\Lambda^*.$ The simplest cycles of the expanding IFS are the fixed points of $g_k,$ the numbers $-\tau w^k$ for $k=1,...,5.$ When we recursively apply the IFS to one of these points, the resulting set is not relatively dense, as shown in Figure \ref{ff3}a for $x_0=-\tau$ (red set) and $x_0=-\tau w^1$ (blue set). Note that the two sets intersect although the initial cycles are disjoint. If we start with $x_0=1/\tau =t ,$ however, we obtain the black Meyer set which contains all red and blue points and many others. This is because the points $tw^k, k=1,...,5$ form a two-sided cycle of the IFS, and the points $-\tau w^k$ are obtained from them by application of the $g_j,$ as indicated in Figure \ref{ff3}b. Vertices represent points,  and there is an edge from $x$ to $y$ if $g_k(x)=y$ for some $k.$ Only the edges between cyclical points were drawn. The black generated set agrees with $\Lambda_2$ in Figure \ref{ff1}b. 

If we start with $x_0=0,$ or with one of the $w^k$ or $-w^k,$ however, we get the Meyer set of Figure \ref{ff1}a. This is because the $\pm w^k,$ the $-tw^k,$ zero and the $\pm (w^{k+1}-w^k)$ form an irreducible network which is explained in Figure \ref{ff3}c. Here each vertex stands for one group of points, and an edge from vertex $u$ to vertex $v$ is drawn if an image under some $g_k$ from each point of the group $u$ is contained in the group $v.$ For instance $g_k(0)=w^k.$ We did not specify which $g_i$ maps $-tw^j$ to which $\pm (w^{k+1}-w^k)$ since it would be confusing to draw a graph with 26 vertices.

So the cyclic part of $\Lambda^*$ in this case consists of two components of 10 and 26 points, and the other 35 red points in Figure \ref{ff2}a are all images of the component of zero under repeated application of the $g_k.$ We note that if we extend the IFS by the mapping $g_0(z)=\tau z,$ as was done in \cite{HMV}, we get the same set $\Lambda^*.$ It then can be generated from one initial point, say $t,$ because $g_0(t)=1$ is in the other component. \quad Our algorithm does not care for such details. It includes all cycles, without regard of the cyclic structure.

\section{The decoration by the number of predecessors}\label{deco}
Decorations play an important part in the study of aperiodic order \cite{Baake2013,tilingencyc,GS}. Markers are used to indicate matching rules for tiles, colors can indicate their orientation or type,  or their Voronoi cells \cite{HMV,HP16}. In some way, all decorations reflect the local structure of a tiling or discrete set.

In our recursive construction of a self-similar discrete set $\Lambda ,$ there is a natural decoration which need not be invented - it comes for free. This is the number of predecessors $x$ which a point $y$ in $\Lambda$ has - the number of $k\in\{ 1,...,m\}$ for which there is a representation $y=g_k(x).$ This number is at least 1 and at most $m,$ the number of mappings. Let us discuss the meaning of this decoration.

Each number $x$ has $m$ successors $y_k=g_k(z)=\beta z+w_k, k=1,...,m.$  They have a typical shape which characterizes the IFS and the point set $\Lambda^*.$ Beside the Pisot factor $\beta,$ the $w_k$ are the only parameters which determine $\Lambda^*.$ In our basic example $\beta=\tau$ and the $w_k=w^k$ form a pentagon.
Now if a point $y$ is the image $y=g_j(x_j)$ for some $j,$ the pentagon around $\tau x_j$ with points $\tau x_j+w^k, k=1,...,5$ must belong to $\Lambda^*.$ Each point $y$ belongs to at least one such pattern which we can call a cluster even though its points are not neighbours. 

\begin{figure}[h!t]
\begin{center}
{\bf a}\includegraphics[width=0.44\textwidth]{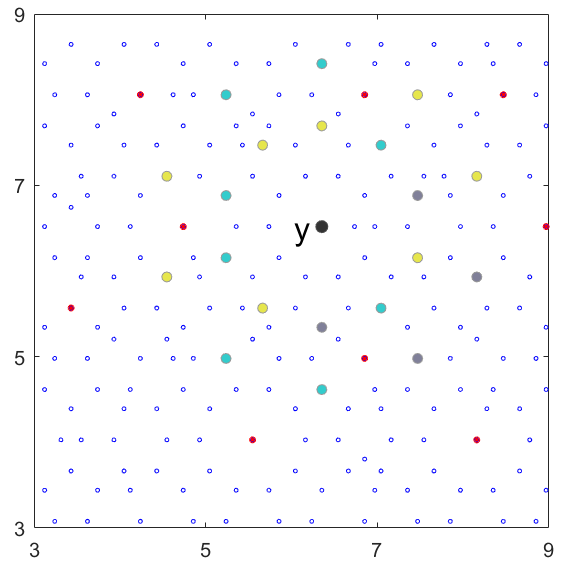} \ {\bf b} \includegraphics[width=0.5\textwidth]{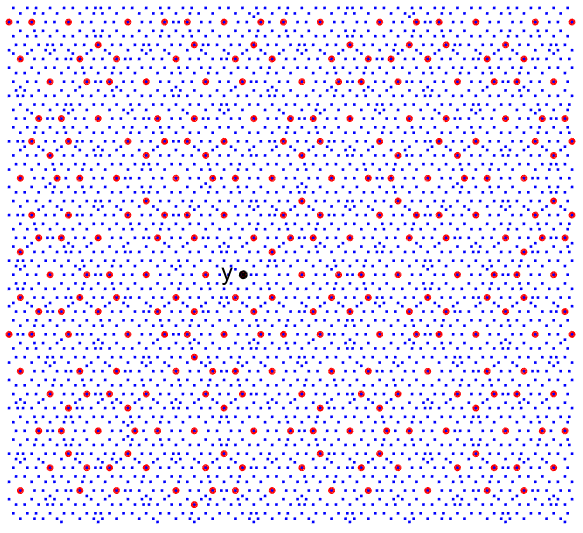}
\end{center}
\caption{The point $y$ in {\bf a} has the maximal number 5 of predecessors. The 20 siblings surround $y,$ forming two nested decagons. Such points can be considered as local symmetry centers. {\bf b} shows on larger scale that these points, circled in red, constitute a more uniform substructure with clear pentagons and decagons.}\label{ff4}      
\end{figure}

If $y$ has the maximal number of five predecessors, however, it belongs to five such clusters. This case is shown in Figure \ref{ff4}a. The five clusters, drawn in yellow, green and grey, must form two decagons around $y,$ with radius $\sqrt{1+t^2} \approx 1.18$ and  $\sqrt{\tau^2+1} \approx 1.9,$ respectively, the length of a side and a diagonal of the pentagon. Thus $y$ can be considered a local center of symmetry of the point pattern. In our case, there are two more decagons around $y$ with radius $t$ and 1, for which we have no proof. In Figure \ref{ff4}b it can be seen that these decagons exist around all points with 5 predecessors which were marked red. 

Regardless of symmetry, it seems correct to say that points with maximal number of predecessors are central and important places in the pattern. This is analogous to network analysis where vertices with a lot of links are considered as hubs or most central spots in the network. In Figure \ref{ff4}, each point has at least one parent and four siblings, but $y$ has five parents and twenty siblings which put $y$ into a stronger position. \
It is also apparent that the shortest distance, which is $t^3,$ the side length of tiny pentagons, is never realized by the red points. There may be one, two or even three neighbors at distance $t^2,$ but not $t^3.$ This can be proved. 

\begin{Proposition} 
Let $\Lambda$ be a solution of \eqref{lamb}, where the IFS \eqref{rifs} has $m$ mappings and factor $\beta.$ Let $\delta$ denote the minimum distance of neighboring points in $\Lambda.$ If $y$ has $m$ predecessors, its distance to each other point $x$ is at least $\delta\cdot |\beta|.$ If $y$ has $m-j$ predecessors, at most $j$ points can have distance $\le \delta\cdot |\beta|$ from $y.$
\label{propm}\end{Proposition}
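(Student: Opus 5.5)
The plan is to pull the distance question back along the predecessor maps and compare it with the minimum distance $\delta$ inside $\Lambda$. Take $y\in\Lambda$ with predecessors $x_k=g_k^{-1}(y)\in\Lambda$ for all $k$ in a set $P\subseteq\{1,\dots,m\}$ of size $m-j$. Let $x\in\Lambda$ be any other point with $|x-y|<\delta|\beta|$. Since $\Lambda$ solves \eqref{lamb}, $x$ also has at least one predecessor: there is some $i$ with $x=g_i(u)$ for a point $u\in\Lambda$.

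First suppose $i\in P$. Then $y=g_i(x_i)$ and $x=g_i(u)$, so $|x-y|=|\beta|\,|u-x_i|$. Here $u\neq x_i$ because $x\neq y$, and both points lie in $\Lambda$. Hence $|u-x_i|\ge\delta$, which gives $|x-y|\ge\delta|\beta|$, a contradiction. So every $x$ with $|x-y|<\delta|\beta|$ can only be an image $g_i(u)$ with $i\notin P$. If $j=0$, i.e.\ $y$ has $m$ predecessors, no such $x$ exists, which proves the first claim.

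For the counting claim, fix an index $i\notin P$ and consider the points $x=g_i(u)$, $u\in\Lambda$, lying within distance $<\delta|\beta|$ of $y$. Two distinct such points $g_i(u)$ and $g_i(u')$ are at distance $|\beta|\,|u-u'|\ge\delta|\beta|$ from each other. I would then argue that at most one such point can be near $y$ for each $i$. There are $j$ indices outside $P$, so at most $j$ points in total.

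The main obstacle is this last step. Two points within distance $\delta|\beta|$ of $y$ can be up to $2\delta|\beta|$ apart, so the bound $\ge\delta|\beta|$ between them does not by itself rule out two of them for the same $i$. The case $j=0$ is clean, and I expect the intended reading of ``at most $j$ points'' is one point per missing predecessor. To make the counting rigorous I would try either of two sharpenings. One is to replace the disc by one of radius $\delta|\beta|/2$. The other is to use the specific lattice structure: compare $u$ with the hypothetical point $g_i^{-1}(y)=(y-w_i)/\beta$ and show that at most one $u\in\Lambda$ lies within $\delta$ of it, using that distinct points of $\Lambda$ are $\delta$ apart. Failing that, I would state the bound as at most $j$ up to a constant factor per index and check it against the examples.
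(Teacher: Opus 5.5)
\textbf{The first claim.} Your argument is the paper's: any $x\neq y$ lies in some $g_k(\Lambda)$, and if also $g_k^{-1}(y)\in\Lambda$, then $|x-y|=|\beta|\,|g_k^{-1}(x)-g_k^{-1}(y)|\ge\delta|\beta|$. This part is complete.

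You were right to read the second claim with strict inequality. With ``$\le\delta|\beta|$'' the case $j=0$ would contradict the first claim, which permits equality; the paper's own red points have neighbours at distance exactly $t^2=\delta\tau$.

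\textbf{The counting claim.} Here the paper contains no idea you are missing. Its proof ends with ``if $y$ had $m-j$ predecessors and $j+1$ neighbors with distance $\le\delta|\beta|$, we would find one $x$ and $k$ for which the argument applies''. That is exactly the one-point-per-missing-index pigeonhole you declined to assume. The step is false, and so is the claim.

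Take $R=\ZZ[\tau]$, $\beta=\tau^2$, and $g_1(z)=\tau^2z$, $g_2(z)=\tau^2z+1$, $g_3(z)=\tau^2z+\tau^2$. The conjugate maps send $W=[0,\tau)$ onto $[0,t)$, $[1,\tau)$ and $[t^2,1)$, whose union is $W$. Since $\tau^{-2}\in R$, the set $\Lambda=\{x\in R:\ x'\in W\}$ solves \eqref{lamb}.
\begin{itemize}
\item $\Lambda$ is the Fibonacci set $\dots,-\tau,0,1,\tau^2,\tau^2+1,\dots$ with gaps $1$ and $\tau$, so $\delta=1$ and $\delta|\beta|=\tau^2$.
\item The point $y=\tau^2$ has $y'=t^2$. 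Its predecessors are $g_1^{-1}(y)=1$ and $g_3^{-1}(y)=0$, but not $g_2^{-1}(y)=t$, since $t'=-\tau\notin W$. So $j=1$.
\item Yet $1$ and $\tau^2+1$ lie at distances $\tau$ and $1$ from $y$, both $<\tau^2$.
\item Both are $g_2$-images, of $0$ and $1$. These sit on either side of $g_2^{-1}(y)=t$, at distances $t$ and $t^2$, both $<\delta$. So your lattice-based repair (at most one $u\in\Lambda$ within $\delta$ of $g_i^{-1}(y)$) cannot work either.
\end{itemize}

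This is not an artefact of using a non-maximal solution. Take $\beta=\tau^4$ with conjugate translations $0,2t^4,4t^4,6t^4,8t^4,10t^4,2-2t^4$. Then $A=[0,2]$ and $\delta=t$ in $\Lambda^*=\Lambda(A)$. Every $y\in\Lambda^*$ with $y'\in[2-2t^4,12t^4]$ has $j=5$, but six points within distance $<\tau^3=\delta|\beta|$.

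\textbf{What survives.} Your other two options give correct statements. As you did, assign to each $x$ with $|x-y|<\delta|\beta|$ an index $i\notin P$ with $x\in g_i(\Lambda)$. Two distinct points with the same index are at least $\delta|\beta|$ apart.
\begin{itemize}
\item On the disc of radius $\delta|\beta|/2$ the assignment is therefore injective, so there are at most $j$ points there.
\item On the full disc of radius $r=\delta|\beta|$, two points $a,b$ (measured from $y$) with the same index must subtend an angle $>60^\circ$ at $y$. Otherwise $|a-b|^2\le|a|^2+|b|^2-|a||b|\le\max(|a|,|b|)^2<r^2$. So each index accounts for at most five points: at most $5j$ in the plane, and $2j$ on a line, which the example attains.
\end{itemize}
The second claim must be weakened in one of these two ways rather than proved.
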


\noindent {\it Proof. } Take $\overline{x}$ with $\beta \overline{x}+w_k=x.$ Then $|y-w_k-(x-w_k)|=|y-x|,$ and $\overline{y}=g_k^{-1}(y)=(y-w_k)/\beta$ 
fulfils $\delta\le |\overline{y}-\overline{x}|=|y-x|/\beta .$ If $y$ had $m-j$ predecessors and $j+1$ neighbors with distance $\le \delta\cdot |\beta| ,$ we would find one $x$ and $k$ for which the argument applies. \hfill $\Box$\smallskip

This explains why larger dots for points with greater number of parents fit into Figure \ref{ff1} without covering neighbors. There is an analogy to physical quasicrystals which are alloys of Aluminium (Al), with atomic weight 27, with heavier metals like Cu, Co, Mn, Ni, or Fe, all with atomic weight larger 50.  The larger atoms are smaller in number. In our patterns, points with large number of predecessors appear in similar percentage as heavy atoms in physical quasicrystals.

The number of predecessors can also be characterized by associated regions of the window. A point $y$ has $\ell$ predecessors if and only if its conjugate point $y'$ belongs to $\ell$ pieces $g'_k(A)$ of the attractor $A.$ Thus, unless there is extreme overlap, the number of points with $m$ predecessors is rare. In Figure \ref{ff0}, the tiny pentagon in the centre represents the intersection of all pieces. It occupies $(t^4)^2\approx 2\% $ of the area of $A.$

Our experience is that the decoration provides insight into the structure of $\Lambda^*,$ as below in Section \ref{cohe}. On the other hand not every number of predecessors has a special meaning, and the subjective choice of coloring and size of the points influences the appearance. We never use more than four colors, and often prefer a decent decoration by red circles just for the maximum number of predecessors.

\begin{figure}[h!t]
\begin{center}
{\bf a}\includegraphics[width=0.4\textwidth]{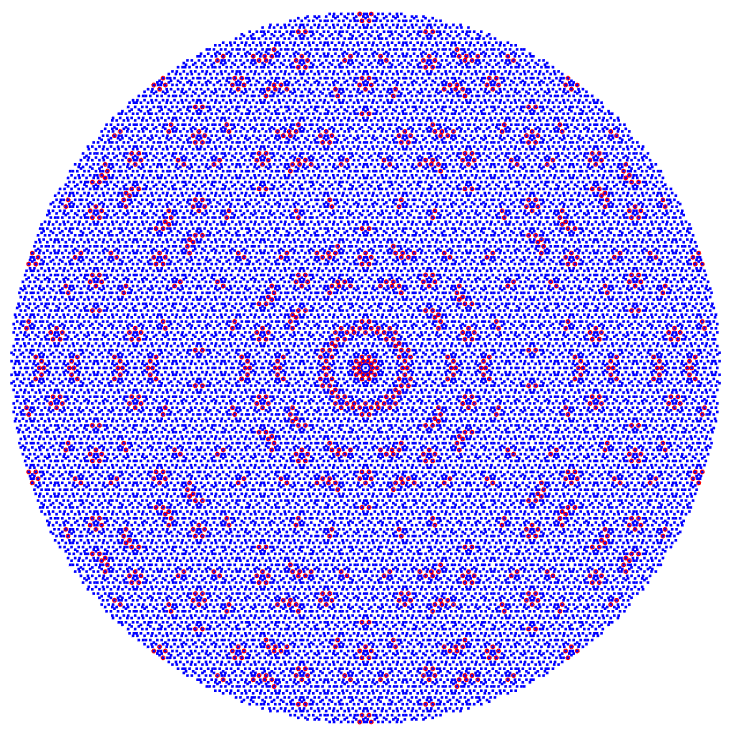} \  {\bf b} \includegraphics[width=0.4\textwidth]{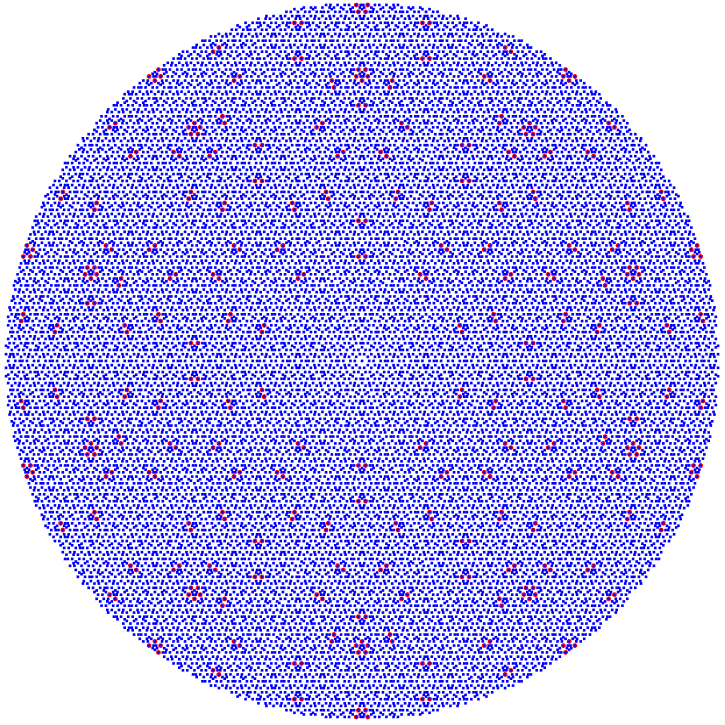}\\
{\bf c} \includegraphics[width=0.4\textwidth]{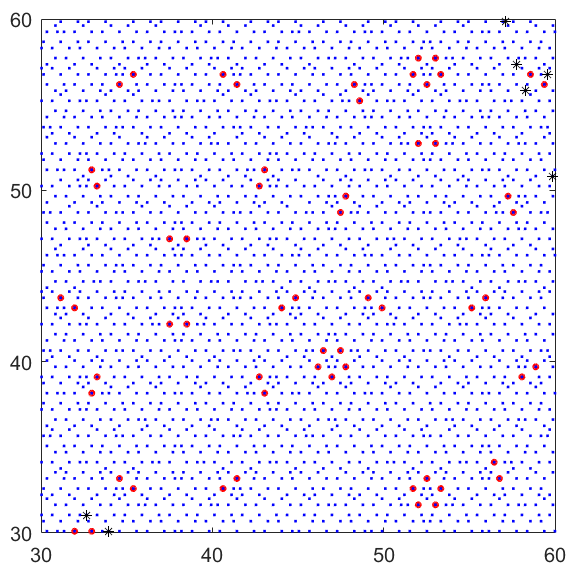} \  {\bf d} \includegraphics[width=0.4\textwidth]{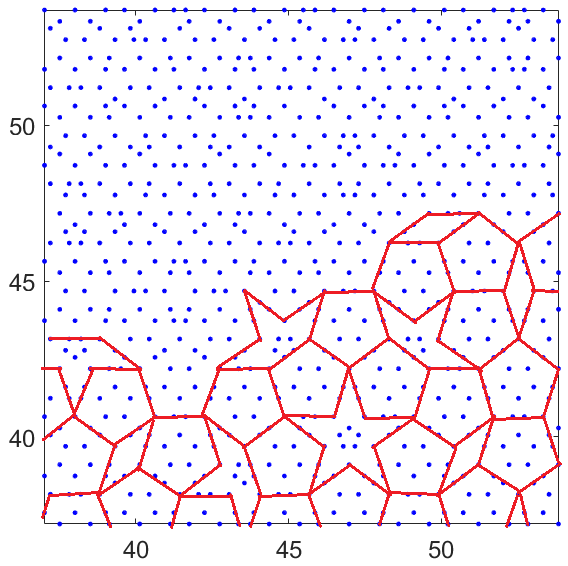} 
\end{center}
\caption{{\bf a.} The decagonal pattern of Hare, Mas{\'a}kov{\'a}, and V{\'a}vra for $\rho<50.$ \ {\bf b.} For the initial point 0 we obtain the model set of ${\rm int}\, A$ which looks more homogeneous. {\bf c.} Far from the origin,  both patterns coincide on larger and larger regions. Here the model set with the window $A$ contains just 7 more points, indicated by black stars. {\bf d.} Part of {\bf c} with Penrose P1 tiles \cite[Figure 10.3.3]{GS} showing that the patch belongs to the Penrose class.}\label{ff5}      
\end{figure}

\section{Compact or open window? Go away from the origin!}\label{orig}
The decagonal model set of Hare, Mas{\'a}kov{\'a}, and V{\'a}vra \cite{HMV} is based on $m=11$ mappings with factor $\tau^2\approx 2.618.$ Besides $g_0(z)=\tau^2z,$ they took the tenth roots of unity: $g_k(z)=\tau^2z+w^k$ and $g_{k+5}(z)=\tau^2z-w^k, k=1,...,5.$  The conjugate IFS consists of $g'_0(z)=t^2z$ and  $g'_k(z)=t^2z\pm (w')^k.$ We are still in the cyclotomic field generated by the fifth roots of unity. The conjugacy $(w^k)'=w^{2k}, k=1,...,5$ applies as $z'=z^7$ to all tenth roots $z=\pm w^k.$  The attractor $A$ is a convex decagon with the vertices $\pm \tau w^k$ which are the fixed points of the homotheties $g'_k.$  The bounds are $c'=\tau$ and $c=t.$ In Lemma \ref{lemm3} the right-hand side equals 6, so $N=1$ is sufficient for our algorithm. The cyclical part of $\Lambda^*$ consists just of the fixed points of the $g_k,$ which are 0 and $ \pm tw^k.$ Their conjugate points are on the boundary of the attractor, except for zero. The model set in Figure \ref{ff5}a is rather inhomogeneous, in particular near the origin.

In contrast to our basic example, this example can be modified by taking the open set ${\rm int}\, A$ as the window. In that case we start the recursion algorithm only with the initial point 0. The resulting pattern in Figure \ref{ff5}b looks more coherent although the neighborhood of 0 is still a bit special, due to the fixed point 0. When we observe the pattern far from the origin, as in Figure \ref{ff5}c, we see that both versions coincide on larger and larger patches. Figure \ref{ff5}d demonstrates that $\Lambda^*$ is locally derivable from the Penrose tilings, cf. \cite[Section 10.3]{GS}.

\begin{Remark}[Draw self-similar patterns far from their cyclical part] \quad
A self-similar \\  point pattern, as given by \eqref{lamb} or by a self-similar tiling or `geometrical substitution', is commonly drawn in a neighborhood of the origin. Such figures are misleading when near to zero there are algebraic integers on the boundary of the window. In that case the origin is not a typical point of the model set.  The general picture of patches is revealed when we consider patches far from the origin, which is possible by our algorithm.
\label{farfrom}\end{Remark} 

Of course, this was just an observation which will be confirmed in the next section. It would be nice to have rigorous estimates for regions where the model sets of $A$ and ${\rm int}\, A$ coincide. In concrete cases, however, this can be checked by calculation.

\begin{figure}[h!t]
\begin{center}
\quad {\bf a}\includegraphics[width=0.3\textwidth]{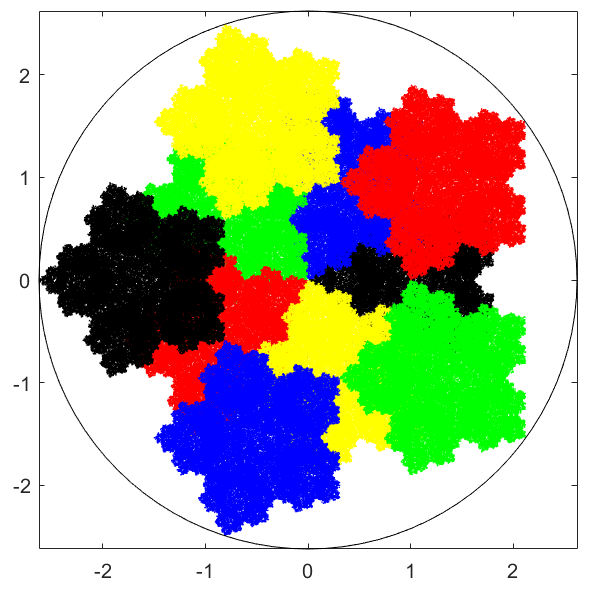} \quad {\bf b} \includegraphics[width=0.3\textwidth]{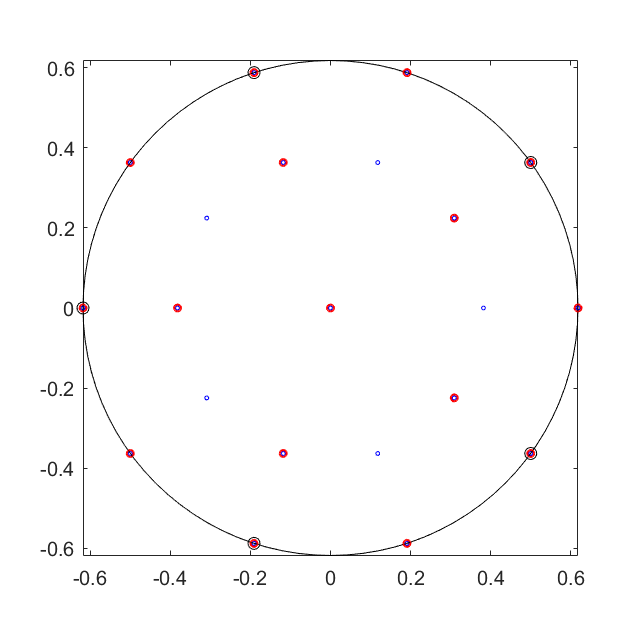} \quad\vspace{1ex}\\
{\bf c}\includegraphics[width=0.3\textwidth]{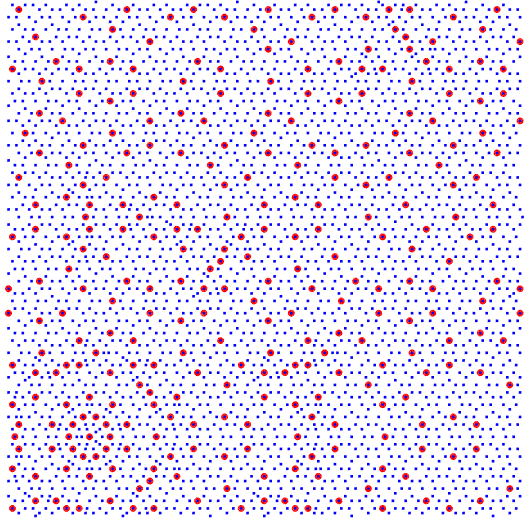} \ \includegraphics[width=0.3\textwidth]{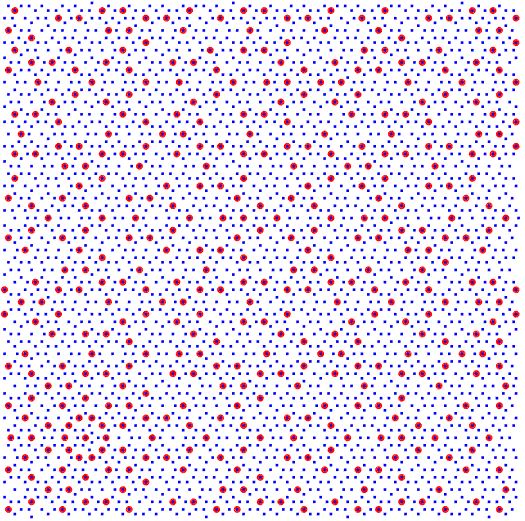} \
\includegraphics[width=0.3\textwidth]{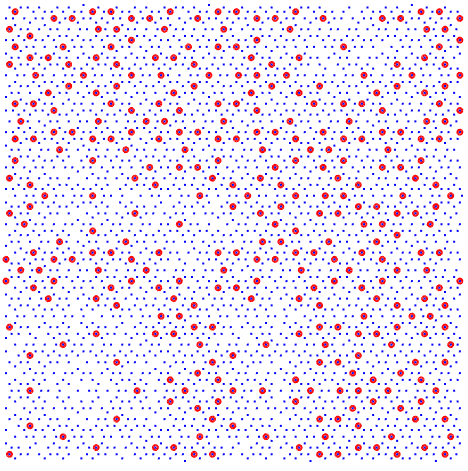}  \vspace{1ex}\\
{\bf d}\includegraphics[width=0.3\textwidth]{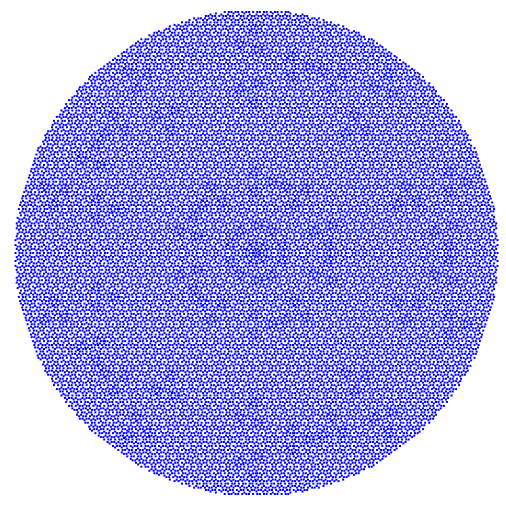} \ \includegraphics[width=0.3\textwidth]{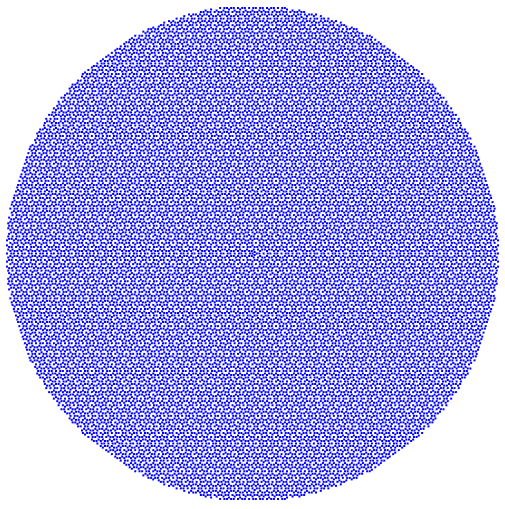}  \
\includegraphics[width=0.3\textwidth]{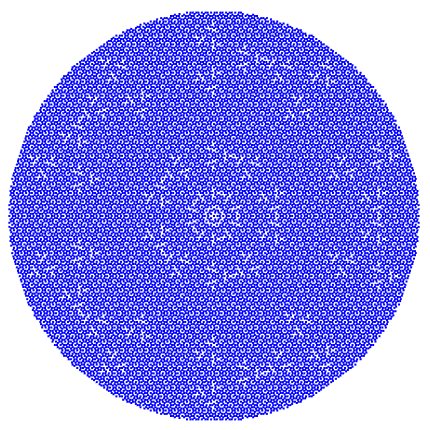} 
\end{center}
\caption{{\bf a.} The attractor of the conjugate IFS. \ {\bf b.} The 16 red points contain the cyclical part of $\Lambda^*.$ The small blue points have conjugates outside $A.$ \ {\bf c.} The model sets of $A$ (left) and of ${\rm int}\, A$ (right), and a set in-between obtained from the initial value 0, all for $-3<z_1,z_2<12.$ \ {\bf d.} The same sets for $\rho<36.$  On the left, overpopulated areas look dark. On the right, underpopulated spots form white filaments.  The example in the middle is the most homogeneous one.}\label{ff6}
\end{figure}

\section{A very coherent decagonal pattern}\label{cohe}
Figure \ref{ff6} visualizes a new IFS with expanding factor $\tau^2.$ We extend the contracting maps $g'_k(z)= t^2z+w^k, k=1,...,5$ by $g'_{k+5}(z)= t^2z-\tau w^k$ and do not include $g'_0.$ The fixed points $\tau w^k$ and $-\tau^2 w^k$ form a decagonal star. Figure \ref{ff6}a shows the resulting area-filling attractor $A.$ Applying conjugacy, and neglecting a permutation of the indices $k,$ the expanding mappings are $g_k(z)=\tau^2z+w^k$ and $g_{k+5}(z)=\tau^2z+tw^k,$ with fixed points $-tw^k$ and $-t^2w^k,$ respectively. The bounds are $c=t$ and $c'=\tau^2.$ Lemma \ref{lemm3} shows that $N=2$ is sufficient for the projection step.

The 21 points of the projection to $B_c(0)$ are shown in Figure \ref{ff6}b. The five small blue points do not belong to the cyclical part of $\Lambda^*.$ Their conjugates are clearly outside the attractor. If the recursion is applied to the remaining 16 points, we obtain the plot on the left of Figure \ref{ff6}c and d. It has some dense spots with points of very small distance, and in the more global view of Figure \ref{ff6}d there are some dark filaments. 

For this IFS we cannot take ${\rm int}\, A$ as a window. When we remove all cyclic points with conjugates on the boundary, no cyclic part is left.  Only the red points $0$ and $tw^k$ have conjugates in the interior of $A$ and therefore must be included in the recursion. The points $-tw^k$ circled in black, the fixed points of the $g_k,$ have conjugates on the boundary of $A.$ The same is true for the red points $-t^2w_k$ on the inner circle. They are the fixed points of the $g_{k+5}.$   However, they map to 0, and 0 maps to the $tw^k$ which are not cyclic themselves. Thus to take an open window, omitting the inner circle of Figure \ref{ff6}b, we have to add $g_0(z)=\tau^2z$ to the IFS in order to make 0 a cyclic point. The result of applying this IFS to the initial point 0 is shown on the right of Figure \ref{ff6}c and d. In the global view there are now some white filaments. 

\begin{figure}[h!t]
\begin{center}
{\bf a}\includegraphics[width=0.46\textwidth]{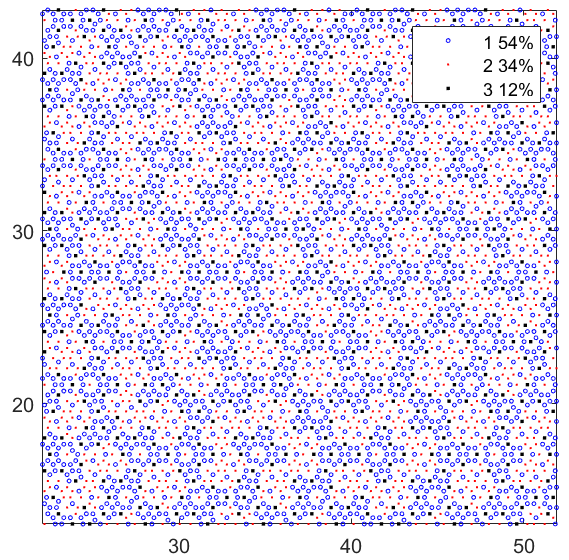} \ {\bf b} \includegraphics[width=0.46\textwidth]{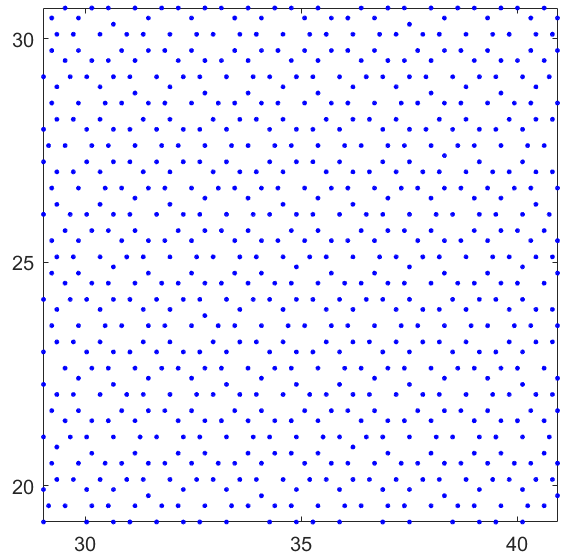} 
\end{center}
\caption{A regular view. Far from zero all three versions coincide. \ {\bf a.} The decoration emphasizes the macrostructure. \ {\bf b.} A part of {\bf a} without decoration. None of the small decagons is complete.}\label{ff7}      
\end{figure}

A very interesting modification is shown in the middle of Figure \ref{ff6}c and d and with more detail in Figure \ref{ff7}. We excluded the points $-tw^k$ circled in black in Figure \ref{ff6}b, but included the red points on the inner circle as initial points. This defines a model set $\Lambda(B)$ with ${\rm int}\, A\subset B\subset A.$ We had to add $g_0(z)=\tau^2z$ to the IFS in order to obtain a relatively dense set $\Lambda .$   The result is an extremely coherent and beautiful pattern. For Figure \ref{ff7} we have chosen a patch far from zero where the three versions of Figure \ref{ff6} coincide.

Note that the maximal number of predecessors in Figures \ref{ff5}, \ref{ff6} and \ref{ff7} is three, with exception of the point 0. Rather few points with three predecessors are centres of pentagons or decagons since the attractors have relatively small overlap.   

This section went far into detail to show that beyond the model set and self-similarity properties, there are differences in the geometric appearance which can be regulated by slightly changing the IFS and the window.

\begin{figure}[h!t]
\begin{center}
\quad {\bf a}\includegraphics[width=0.3\textwidth]{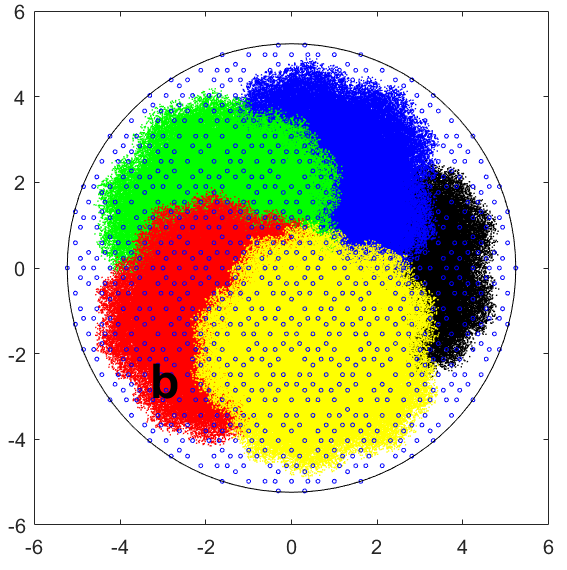} \quad {\bf b} \includegraphics[width=0.3\textwidth]{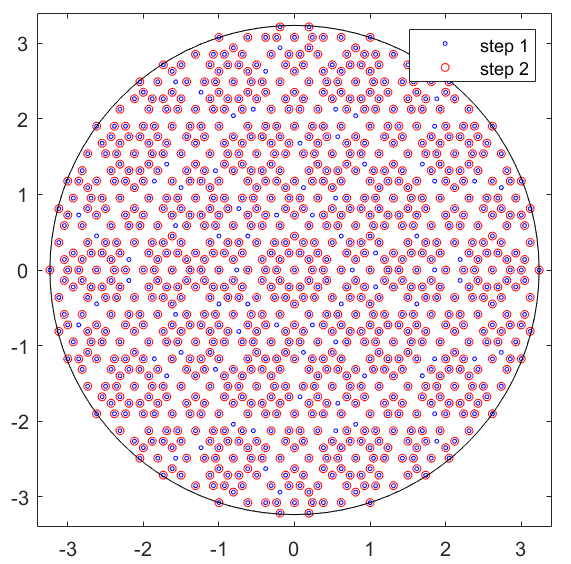} \quad\vspace{1ex}\\
{\bf c}\includegraphics[width=0.45\textwidth]{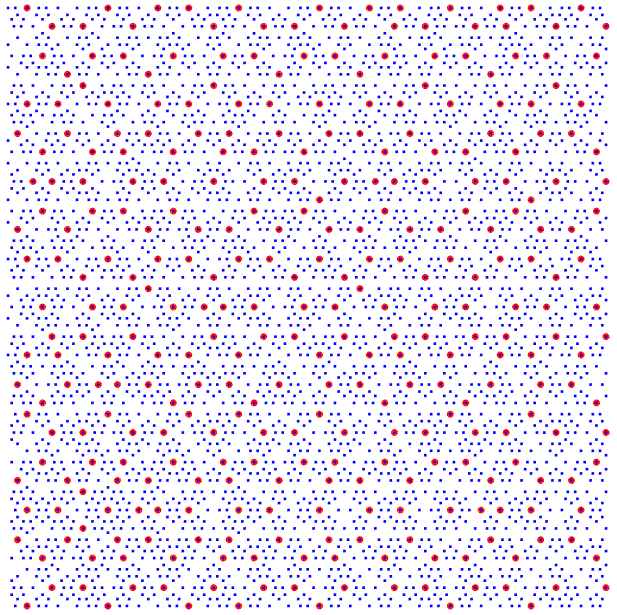} \ {\bf d} \includegraphics[width=0.45\textwidth]{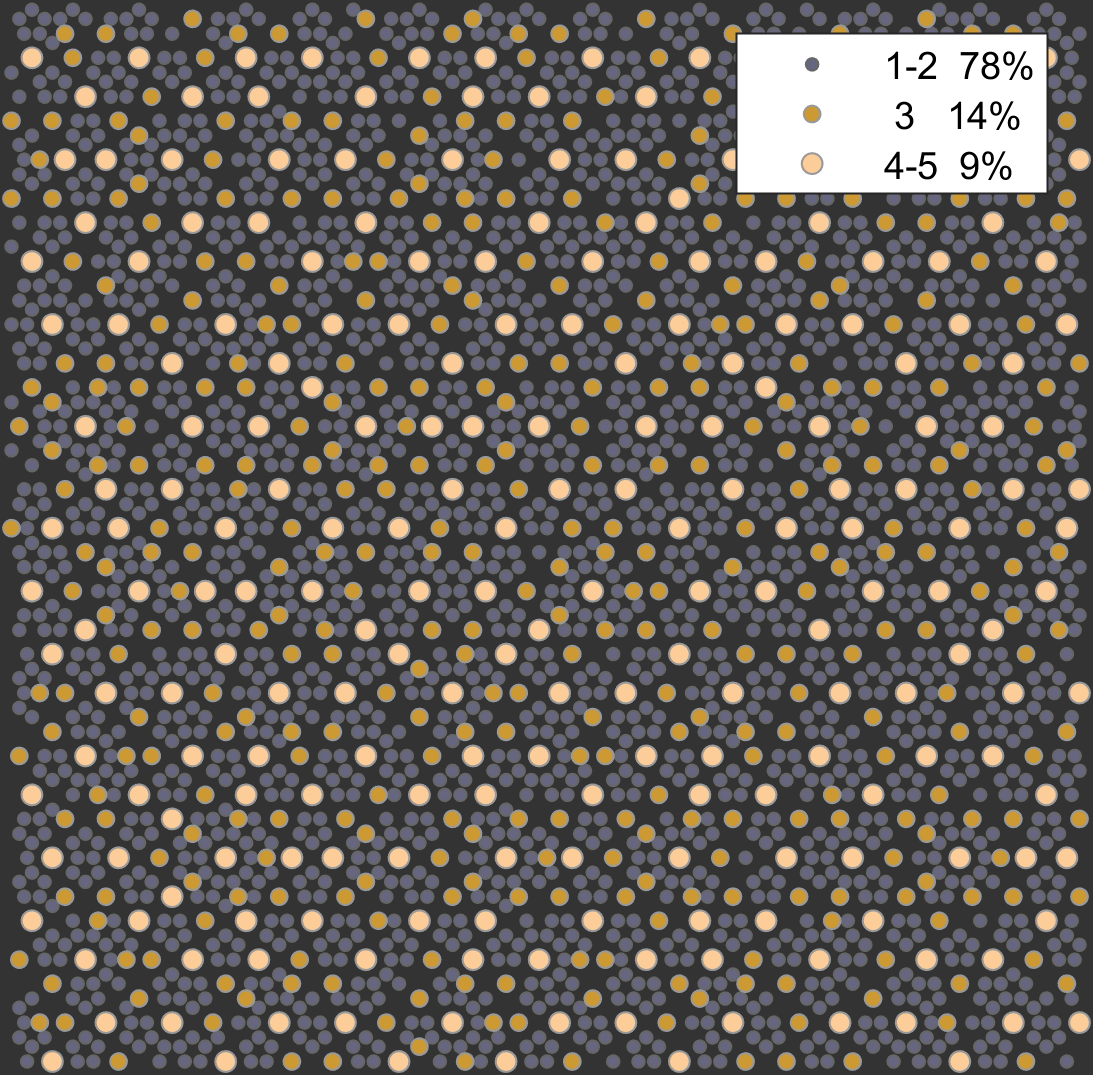}  
\end{center}
\caption{{\bf a.} The attractor of $g'_k(z)=-t z+2w^k$ with almost 1000 algebraic integers which have their conjugates in $B_c(0).$ \ {\bf b.} Step 2 of the algorithm removes less than 100 points. The remaining 900 points contain the cyclical part of $\Lambda^*.$ \ {\bf c.} The resulting pattern has four times higher density and is shown here for $-2<z_1,z_2<10$ while similar pictures above fulfilled $-4<z_1,z_2<21.$ We can see ``empty streets''. \ {\bf d.} The same picture with full decoration on slightly smaller scale. }\label{ff8}      
\end{figure}

\section{Further examples}\label{conclu}
We restrict ourselves to a few more examples. First we replace the basic IFS $g_k(z)=\tau z+ w^k$ by $g_k(z)=\tau z+ 2w^k, k=1,...,5.$  This is equivalent to studying the basic IFS on $R/2$ instead of $R.$ The new solution set will have higher density, and will contain the basic example as a subset. The bounds of Section \ref{penta} are doubled: $c=2\tau$ and $c'=2\tau^2.$ A larger part of $\ZZ^d$ needs to be considered. The right-hand side of Lemma \ref{lemm3} is now 76, which yields $N=5.$  (For $n_1=6,$ say, the smallest value of the left-hand side would be $36+6\cdot 9=90.$) Thus we have to consider $\nu =(2\cdot 5+1)^4=14641$ algebraic integers in the projection step. Almost 1000 lattice points project to the two circles in physical and internal space, and 900 survive the cleaning step 2 of the algorithm, as indicated in Figure \ref{ff8}a,b.  

The resulting Meyer set $\Lambda^*$ in Figure \ref{ff8}c,d shows  ``empty streets'', apparent already in Figure \ref{ff8}b. They reflect the lattice directions and alternate with parallel streets full of points.  We are not on a sublattice, however. We have a full model set $\Lambda^*.$  Compared with the other examples, the set has double density both in real and imaginary direction. We magnified $\Lambda^*$ in order to allow comparison with Figures \ref{ff6} and \ref{ff7}.

There is an infinity of choices for the factor $C\in\ZZ[\tau]$ in $g_k(z)=\tau z+ Cw^k$ although only few of them lead to a tractable $N.$ For $C=\tau^n, n\in\ZZ$ we get our basic example with another density. When we rescale both axes with factor $1/C ,$ we obtain exactly the basic example. Thus when we factor $\ZZ[\tau]$ by $u\sim v$ if $v=u\tau^n,$ it is sufficient to take one parameter from each equivalence class. Besides $C=3,4,...$ we can take values like $C=2+\tau, 3-\tau,...$ The conjugate attractors coincide up to a homothety. So all versions have the same relative density of points with $j=1,...,5$ predecessors.  A similar modification can be applied to the examples of Section \ref{cohe}, where the $w_k$ were on two different circles. \medskip

\begin{figure}[h!t]
\begin{center}
{\bf a}\includegraphics[width=0.45\textwidth]{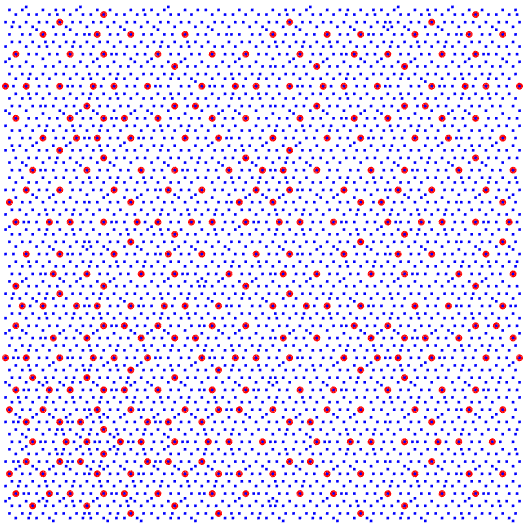} \ {\bf b} \includegraphics[width=0.47\textwidth]{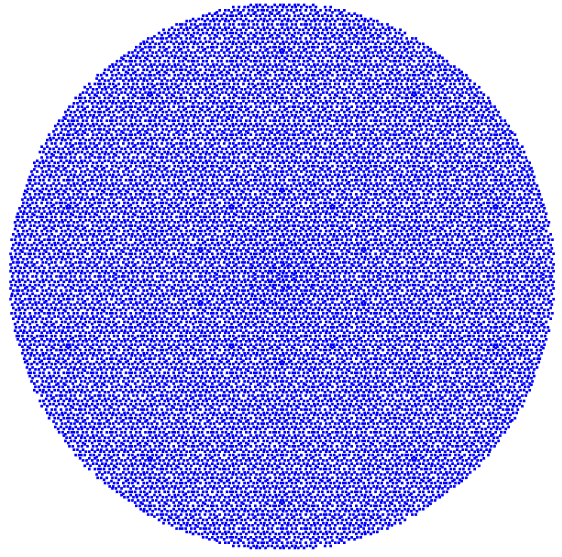}  
\end{center}
\caption{{\bf a.} $\Lambda^*$ for $g_k(z)=-\tau z+w^k.$ \ {\bf b.} View for $\rho\le 30.$ Compared with Figure \ref{ff2}c, there is less large-scale structure, except for a few little dark spots, most of them near to the origin.}\label{ff9}      
\end{figure}

So far we studied only IFS with factor $\tau$ or $\tau^2.$ Now let us consider the $g_k(z)=-\tau z +w^k$ with their conjugate mappings $g'_k(z)=tz+w_k, k=1,...,5.$ They generate the pentagon attractor of Figure \ref{ff0}. The bounds are $c=\tau$ and $c'=\tau^2$ as in Section \ref{penta}, so the first step of the algorithm is the same. In the step 2, five more points are assigned to $\Lambda^*.$ Figure \ref{ff9} shows $\Lambda^*$ which is a bit different from previous examples. 

All modifications for the factor $\tau$ can also be applied to the factor $-\tau.$ Moreover, there is no need to choose the $w_k$ as an $n$-gon on a circle, which opens up a huge box of other examples. When we take four maps $g_k(z)= \tau z+w_k,$ for instance, with $w_k=0,1,w,w^{-1}$ where $w$ is a fifth root of unity, the attractor $A$ still has interior points. The resulting patterns look less symmetric than the above figures and seem to have larger holes. 

\section{Outlook}\label{outlo}
We introduced a construction of self-similar cut-and-project model sets.  We have focussed on the most simple examples with decagonal symmetry, because of their relation to physical quasicrystals.  We further specialized by taking only real factors $\beta =\tau, \tau^2, -\tau.$  We could also take complex Pisot units like $\tau w$ or maps of the form $g_k(z)=\beta s_k z+w_k$ where $s_k$ is a rotation or reflection compatible with our algebraic field, as in \cite{EFG3}. There are hundreds of examples which could be automatically generated this way. Indeed, our method allows the mass production of self-similar cut-and-project patterns.

This raises two questions concerning the quality of the sets $\Lambda^* .$ Mass production often comes with lower quality. We can select the best specimen from our collection, however. Are they as nice and interesting, and useful as quasicrystal models, as the Penrose patterns? And secondly, which of the patterns are equivalent in a wide or narrow sense? 

Thus we must think about quality criteria for quasicrystal model sets. We must determine densities, difference sets, autocorrelation functions, and diffraction spectra, as well as other structural properties.    
Conceptional as well as experimental, computer-assisted work is required to obtain an overview and to evaluate the models for possible use in physics and material science.

To conclude this paper, we give a partial answer to the second question. We show that our basic pentagonal example and the coherent example of Section \ref{cohe} are essentially different from the Penrose and T\"ubingen triangle patterns, and from each other. We use a criterion developed by Baake et al.: when the Hausdorff dimensions of the boundaries of the windows of two cut-and-project sets differ, then the two patterns are neither mutually locally derivable \cite[Remark 7.6]{Baake2013} nor topologically conjugate \cite[Proposition 6.1]{BGG}. For the Penrose and T\"ubingen triangle patterns, as well as for Figures \ref{ff5} and \ref{ff9}, the windows are polygons with boundary dimension one.  For Figures \ref{ff2} and \ref{ff7}, the boundaries have a fractal structure with slightly larger dimension.

\begin{Proposition} 
The Hausdorff dimension of the window boundary is 1.06 for the basic pentagonal example of Figure \ref{ff2} and 1.14 for the coherent example of Figure \ref{ff7}. Thus, according to \cite{Baake2013,BGG}, they are not equivalent to the Penrose or T\"ubingen triangle patterns, and neither to each other. Here equivalence is considered in the wide sense either of mutual local derivability \cite{Baake2013} or of topological conjugacy of associated tiling dynamical systems \cite{BGG}.
\label{propz}\end{Proposition}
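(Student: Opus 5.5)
To compute the Hausdorff dimension of $\partial A$ I would use the standard neighbour-graph (boundary graph) method for self-similar sets with overlaps satisfying the weak separation condition. The Pisot property guarantees this condition for the conjugate IFS \eqref{cifs}, as noted in Section \ref{over}. For a self-similar set $A=\bigcup g'_k(A)$ with common contraction factor $r=|\beta'|$, I would consider the \emph{neighbour maps} $h=(g'_{i})^{-1}g'_{j}$ with $g'_i(A)\cap g'_j(A)\neq\emptyset$. Iterating, I would take all maps $h'=(g'_{u})^{-1}h\,g'_{v}$ with $h'(A)\cap A\neq\emptyset$. Each such map is a translation $z\mapsto z+a$. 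Since $a'$ is conjugate to an algebraic integer and lies in a bounded set (namely $A-A$), while the other conjugate is controlled as in Proposition \ref{propo2}, there are only finitely many neighbour types. This finiteness is exactly the mechanism of Sections \ref{fini} and \ref{algo}, so the translation vectors can be enumerated in $\ZZ^d_N$ by the same projection-and-cleaning algorithm, with the window replaced by $A-A$.

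With the finite set of neighbour translations $a$ in hand, I would build the boundary graph. Its vertices are the translations $a\neq 0$ with $A\cap(A+a)\neq\emptyset$. There is an edge $a\to b$ labelled $(u,v)$ whenever $b=(a+w'_v-w'_u)/\beta'$ is again a vertex. One must also handle the \emph{overlap} issue: pieces $g'_k(A)$ whose translation differences are $0$ or which are covered by other pieces must be removed. The boundary is then the set of points with infinite paths in the graph, after deleting neighbours that lie in the interior of the union of other pieces. Concretely, $\partial A$ is the attractor of a graph-directed IFS with ratio $r$, and $\dim_H\partial A=\log\lambda/(-\log r)$, where $\lambda$ is the spectral radius of the adjacency matrix of the reduced, strongly connected boundary graph. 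The open set condition for this graph IFS follows from the weak separation condition via the finite-automaton construction cited in Section \ref{over}.

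For the basic example $r=t$, so $\dim=\log\lambda/\log\tau$. For the coherent example of Section \ref{cohe}, $r=t^2$, so $\dim=\log\lambda/(2\log\tau)$. The reported values 1.06 and 1.14 then correspond to specific Perron numbers $\lambda$, roughly $\tau^{1.06}\approx 1.66$ and $\tau^{2.28}\approx 2.99$. I would compute these numerically by building the matrices with the MATLAB routines already used. The final assertion then follows immediately from the cited invariance results \cite[Remark 7.6]{Baake2013} and \cite[Proposition 6.1]{BGG}: the Penrose and T\"ubingen windows are polygons with boundary dimension $1$, and $1.06\neq 1.14$.

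The main obstacle is the correct reduction of the boundary graph under the heavy overlaps. A neighbour pair $g'_i(A)\cap g'_j(A)$ can lie inside a third piece, so not every infinite path gives a true boundary point. Counting such paths would inflate $\lambda$ and even give dimensions near $2$. My first attempt would be to characterize boundary points as those $y$ such that some neighbour $A+a$ of every cylinder along the address of $y$ is \emph{not} contained in $A$. I would implement this as a pruning step on the graph, analogous to the predecessor-cleaning in Step 2. I would then verify the resulting $\lambda$ by box-counting on a fine digitization of $A$.
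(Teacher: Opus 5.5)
\textbf{Verdict: there is a genuine gap.} Your last step is the same as the paper's. Window-boundary dimension is an invariant for mutual local derivability and for topological conjugacy, polygonal windows have boundary dimension $1$, and $1.06\neq 1.14$. But the two numbers, which are the real content of the proposition, are never derived:
\begin{itemize}
\item Your values $\lambda\approx\tau^{1.06}\approx 1.66$ and $\lambda\approx\tau^{2.28}\approx 2.99$ come from inverting the very dimensions you are supposed to prove, so that part is circular. ``Compute numerically and check by box-counting'' does not establish them.
\item The step you yourself flag as the main obstacle is left open, and the pruning criterion you propose cannot separate boundary points from interior points. A neighbour $A+a$ with $a\neq 0$ is never contained in $A$ itself (iterating $A+a\subseteq A$ contradicts boundedness). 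On the other hand, every neighbouring piece is contained in the whole attractor. What actually decides whether a piece carries boundary is whether it is covered by the interior of the union of all surrounding pieces, which is a property of the whole neighbourhood configuration.
\item Even a correctly pruned graph gives $\dim_H\partial A=\log\lambda/(-\log r)$ only if the boundary pieces coded by different paths are essentially disjoint. The finite-automaton result mentioned in Section~\ref{over} does not supply this open set condition: it describes $A$ as a tiling, not $\partial A$. With these heavy overlaps, different paths do code overlapping parts of the boundary, which inflates $\lambda$.
\end{itemize}

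What is missing is an explicit graph-directed decomposition of the boundary with the open set condition; the paper finds it geometrically.

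For the basic example, $\partial A$ is a fractal decagon whose short sides are copies of a long side $C$ with ratio $t$. The side $C$ consists of two copies $C_1,C_2$ with ratio $t^2$, together with a middle part made of two mutually isometric pieces $D_1,D_2$. Each $D_j$ is a copy with ratio $t^3$ of only a part of $C$ (because of the overlap), and its basic line makes an angle of $144^\circ$ with that of $C_j$. Solving $t^2+\ell\cos 36^\circ=\frac12$ gives $\ell=t^4$. Hence $D=C_1\cup D_1\cup D_2$ has $D_1$ as an exact copy with ratio $t^3$, and one obtains the graph IFS $C=D\cup C_2$, $D=C_1\cup D_1\cup D_2$. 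With $x=t^\alpha$, the measure equations $c=d+x^2c$ and $d=x^2c+2x^3d$ give $2x^5-2x^3-2x^2+1=0$. Its root is $x\approx 0.6011$, so $\alpha\approx 1.058$; your $\lambda=1/x$ is a root of $y^5-2y^3-2y^2+2$.

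For the coherent example, a side is $C_1\cup C_2$ with ratio $t^2$ plus a middle piece that is a folded copy of the $C_j$. This gives $\alpha=\log 3/(2\log\tau)\approx 1.141$, with $\lambda=3$ exactly.

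To complete your automaton route, you would have to prove that your pruned graph reproduces exactly these relations.
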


\begin{figure}[h!t]
\begin{center}
\begin{minipage}[b]{0.45\textwidth}{\bf a}\qquad\includegraphics[width=0.75\textwidth]{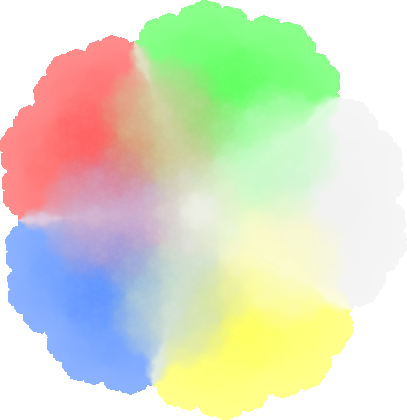}\\
{\bf b}\includegraphics[width=0.9\textwidth]{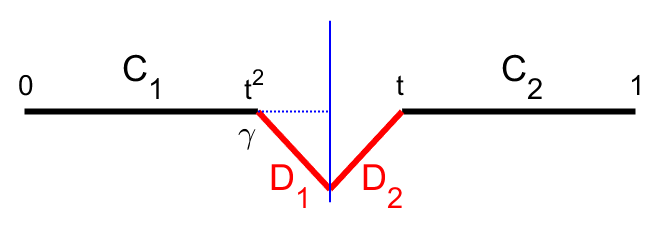} \end{minipage} \qquad
{\bf c} \includegraphics[width=0.45\textwidth]{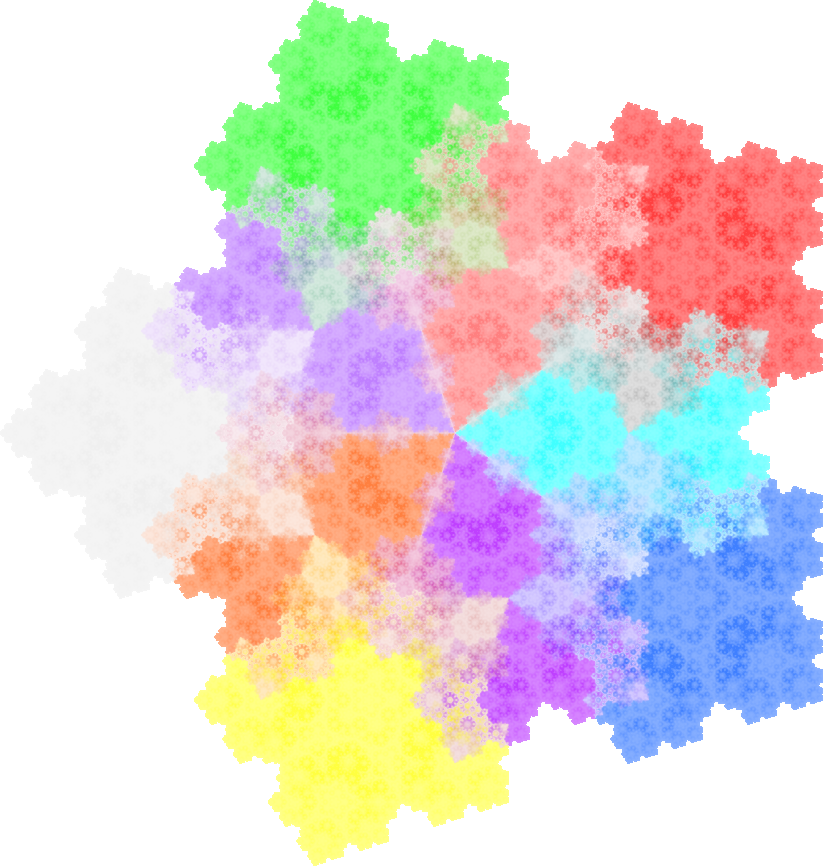} \\
\end{center}
\caption{{\bf a.} The window of the basic example is a fractal decagon. \ {\bf b.} The self-similar structure of a decagon side $C$ shown in the proof.   \ {\bf c.} The curious structure of the boundary of the window of Section \ref{cohe}. The middle part of a side is a folded version of the two other parts.}\label{ff10}      
\end{figure}

\noindent {\it Proof. } In Figures \ref{ff2} and \ref{ff8}, and more clearly in Figure \ref{ff10}a, which used the IFStile package \cite{M}, the boundary of $A$ is a fractal decagon consisting of five long and five short Koch-like curves. (The vertices are the fixed points of the $g'_kg'_{k\pm 2}, k=1,...,5.$) The short sides are similar copies of the long ones, with contraction ratio $t.$ We consider a long side $C$ and construct a graph-IFS structure with open set condition on $C.$ Then there is a positive finite $\alpha$-dimensional Hausdorff measure $\mu$ on $C$ which allows the calculation of the dimension $\alpha,$ cf. \cite{EFG3,EFG5}.  Obviously, $C$ consists of two copies $C_1, C_2$ of itself, shrunk by the factor $t^2,$ and a middle piece made of two isometric copies $D_1,D_2$ of \emph{only a part of $C,$ due to the overlap.} The contraction ratio is $t^3$ since $D_j$ belongs to the short side of some small fractal decagon $A_k$ with long side $C_j.$ For that reason, the angle between (the basic lines of) $C_j$ and $D_j$ is $\gamma=144^o.$  

We now determine the length $\ell$ of the basic line of $D_1,$ introducing new coordinates on $C.$ The endpoints of $C$ are taken as 0 and 1, so the other endpoints of $C_1$ and $C_2$ are $t^2$ and $t,$ respectively. The symmetry axis of $C$ consists of the complex numbers with real part $\frac12 .$ Thus we have to solve the equation
\[ t^2 +\ell \cos 36^o =\frac12 \ .\]
Since $\cos 36^o=\tau/2,$ we obtain $2t^2 +\ell\tau=1=t^2+t$ and $\ell=t^4.$

We define $D=C_1\cup D_1\cup D_2.$ Then $D$ has length $t,$ and $D_1$ is an accurate similar copy of $D,$ shrunk by the factor $t^3.$ The graph-IFS structure of $C$ and $D$ is given by the set equations
\[  C=D\cup C_2 \quad\mbox{ and } \quad D=C_1\cup D_1\cup D_2\ .\]
The Hausdorff measure $\mu$ scales under similarities as $\mu(rB)=r^\alpha \mu(B).$ Let $x=t^\alpha.$ Then the measures $c=\mu(C)$ and $d=\mu(D)$ fulfil the equations
\[ c= d+x^2c  \quad\mbox{ and } \quad d= x^2c+2x^3d\ .\]
We substitute $c=d/(1-x^2)$ into the second equation and divide by $d>0$ to obtain the polynomial equation
\[ 2x^5 -2x^3-2x^2+1=0\ .\]
Since $\alpha$ is positive, we look for a root $x=t^\alpha$ between 0 and 1. There is only one such root, $x\approx 0.6011.$ The Hausdorff dimension is $\alpha=\log x/\log t\approx 1.058.$ \smallskip

In Figure \ref{ff7}, the attractor is a regular pentagon where the sides are Koch-like curves. As above, such a curve $C$ is the union of two similar copies $C_1,C_2,$ shrunk with contraction factor $t^2,$ and a middle piece. Curiously, the middle piece is a \emph{folded version} of the pieces $C_j.$ To prove this, one has to determine one of the two folding points, and show that it has the same distance from the respective vertices of the two pieces $A_k.$ Due to the properties of the Hausdorff measure, the curve $C$ has the same dimension as a self-similar set with three pieces with contraction factor $t^2.$ That is, $\alpha =\log 3/-\log t^2\approx 1.141.$
\hfill $\Box$

Thus we have shown that our method produces novel interesting examples, which is only a first step towards a more comprehensive evaluation. \medskip

{\bf Acknowledgement. }  The first author gratefully acknowledges a discussion with Michael Baake which led to Proposition \ref{propz}.

\medskip\noindent
Institute of Mathematics, University of Greifswald, Germany, \url{bandt@uni-greifswald.de}\vspace{1ex}\\
Acad\'emie des Sciences, 23 quai de Conti, 75006 Paris, France, \url{yves.meyer305@orange.fr}
 
\end{document}